\tikzset{widthone/.style={draw, minimum width=0.6cm, fill=white, minimum height=16pt, inner sep=-10pt}}
\tikzset{widthtwo/.style={draw, minimum width=1.6cm, fill=white, minimum height=16pt, inner sep=-10pt}}
\tikzset{widththree/.style={draw, minimum width=2.2cm, fill=white}}
\tikzset{label/.style={font=\scriptsize}}
\theoremstyle{definition}
\newtheorem{definition}{Definition}[section]
\newtheorem{example}[definition]{Example}
\theoremstyle{satz}
\newtheorem{satz}[definition]{Proposition}
\theoremstyle{theorem}
\newtheorem{theorem}[definition]{Theorem}
\theoremstyle{corollary}
\theoremstyle{lemma}
\newcounter{remark}[section]
\tikzset{dot/.style={draw, fill, circle, inner sep=0pt, minimum width=5pt}}
\newcommand\ignore[1]{}
\newcommand\vc[1]{\begin{tabular}{ccc}#1\end{tabular}}
\def\C{\ensuremath{\mathcal{C}}}
\def\N{\mathbb{N}}
\title{On Structures in Arrow Categories}
\author{\small\vc{\vc{Paulina L. A. Goedicke\\Institute for Theoretical Physics\\University of Cologne\\\texttt{goedicke@thp.uni-koeln.de}} & \vc{Jamie Vicary\\Department of Computer Science\\University of Cambridge\\\texttt{jamie.vicary@cl.cam.ac.uk}}}
}
\date{}
\begin{document}

\maketitle
\begin{abstract}\noindent
    In this article we investigate which categorical structures of a category $\C$ are inherited by its arrow category. In particular, we show that a monoidal equivalence between two categories gives rise to a monoidal equivalence between their arrow categories. Moreover, we examine under which circumstances an arrow category is rigid and pivotal. Finally, we derive what the (co)algebra, bialgebra and Hopf algebra objects are in an arrow category. 
\end{abstract}
\tableofcontents
\section{Introduction}
Given a category $\C$, the category of arrows of $\C$ is a very fundamental concept in category theory~\cite{IntroCat}. This article is a revision of a chapter from the first author's master's thesis\footnote{The thesis was submitted in October 2021 at the Department of Mathematics at University of Hamburg.}, investigating which categorical structures an arrow category inherits from its underlying category $\C$. We start by discussing functors and natural transformations in Section~\ref{sec: functors} and show that an equivalence between two categories gives rise to an equivalence between their arrow categories. In Section~\ref{sec: monoidal product in arrow cats} we then extend this to braided monoidal categories and functors. In Section~\ref{sec: duals in arrow cats} we prove that the arrow category of a rigid monoidal category restricted to objects that are isomorphisms is also rigid. We further show that in that case, if the underlying category is in addition pivotal, its arrow category is also pivotal. In fact, it is a ribbon category. In Section~\ref{sec: monoids comonoids in arr cats} we then discuss (co)monoids, bialgebras, Frobenius structures and Hopf algebras in arrow categories. \\\\
There already have been definitions of monoidal products in arrow categories~\cite{white2018arrow} and the rest of our results also seem to be quite fundamental, however we still believe them to have some novelty. \\\\
We assume familiarity with the basic concepts of category theory, the graphical calculus and quantum groups and refer to~\cite{graphicalcalculus},~\cite{IntroCat} and~\cite{jamie} for an introduction. Here we will only briefly review the definition of an arrow category: 
\begin{definition}(\cite{IntroCat}, p.~24-25) \label{def. arrow category}
Let $\mathcal{C}$ be a category. The \textit{arrow category of} $\mathcal{C}$ Arr($\mathcal{C}$) is defined as follows:
\begin{itemize}
    \item \textit{objects} are triples ($A$, $B$, $h$) where $A$, $B$ $\in$ obj($\mathcal{C}$) and $h: A\longrightarrow B$.
    \item \textit{morphisms} $\phi: (A, B, h) \longrightarrow (A', B', h')$ are pairs ($\phi_{A}$, $\phi_{B}$) of morphims $\phi_{A}: A \longrightarrow A'$ and $\phi_{B}: B \longrightarrow B'$ in $\mathcal{C}$ such that the following diagram commutes:
    \[\begin{tikzcd}
    A \arrow[swap]{d}{h} \arrow{r}{\phi_A} &     \arrow{d}{h'} A' \\
    B \arrow{r}{\phi_B} & B'
\end{tikzcd}
\]
\end{itemize}
\end{definition}
\begin{example}\label{ex. arrow categoryy of Rel}The arrow category of \textbf{Mat}($\mathbb{N}$) has $b\times v$-matrices $\chi$ as objects, where $b$ and $v$ are natural numbers. A morphism $S: M \longrightarrow N$ between two matrices $M: v \longrightarrow b$ and $N: v^\prime \longrightarrow b^\prime$ is given by a pair of matrices $(S_{b}, S_{v}): (v, b) \longrightarrow (v', b')$ such that the following diagram commutes:
    \[\begin{tikzcd}
   b \arrow{r}{S_{b}} \arrow[swap]{d}{M} &     \arrow{d}{N} b'\\
   v \arrow{r}{S_{v}} & v'
\end{tikzcd}
\]
\end{example}

\section{Structures in Arrow Categories}
We will now discuss what kind of categorical structures one can define in arrow categories. 
\subsection{Functors in Arrow Categories}\label{sec: functors}
In the following we will show that a functor between two categories gives rise to a functor between their arrow categories and prove a similar statement for natural transformations.
\begin{satz}\label{prop. functor between arrow cat.}
 Let $\mathcal{C}$ and $\mathcal{D}$ be two categories and let $\mathcal{F}: \mathcal{C} \longrightarrow \mathcal{D}$ be a covariant functor between those categories. Then $\mathcal{F}$ gives rise to a covariant functor $\Tilde{\mathcal{F}}$ between Arr($\mathcal{C}$) and
Arr($\mathcal{D}$).
\end{satz}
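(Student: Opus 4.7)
The plan is to construct $\tilde{\mathcal{F}}$ by applying $\mathcal{F}$ componentwise, and then verify that this assignment respects the defining data of an arrow category. I would proceed in three short steps: action on objects, action on morphisms (including the verification that the result is a well-defined arrow-category morphism), and the functor axioms.

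First, on objects I would set $\tilde{\mathcal{F}}(A, B, h) := (\mathcal{F}(A), \mathcal{F}(B), \mathcal{F}(h))$. This is a legitimate object of $\mathrm{Arr}(\mathcal{D})$ because $\mathcal{F}(h)$ is a morphism $\mathcal{F}(A) \to \mathcal{F}(B)$ in $\mathcal{D}$, matching the triple format of Definition~\ref{def. arrow category}. On morphisms, given $\phi = (\phi_A, \phi_B): (A,B,h) \to (A',B',h')$ I would set $\tilde{\mathcal{F}}(\phi) := (\mathcal{F}(\phi_A), \mathcal{F}(\phi_B))$.

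The only thing that needs checking is that $(\mathcal{F}(\phi_A), \mathcal{F}(\phi_B))$ actually is a morphism in $\mathrm{Arr}(\mathcal{D})$, i.e.\ that the corresponding square commutes. But this is immediate: applying $\mathcal{F}$ to the commuting square for $\phi$ yields $\mathcal{F}(h') \circ \mathcal{F}(\phi_A) = \mathcal{F}(h' \circ \phi_A) = \mathcal{F}(\phi_B \circ h) = \mathcal{F}(\phi_B) \circ \mathcal{F}(h)$ by functoriality of $\mathcal{F}$.

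Finally, I would verify the two functor axioms for $\tilde{\mathcal{F}}$. Since identities and composition in $\mathrm{Arr}(\mathcal{C})$ and $\mathrm{Arr}(\mathcal{D})$ are defined componentwise, preservation of identities reduces to $\mathcal{F}(\mathrm{id}_A) = \mathrm{id}_{\mathcal{F}(A)}$ (and similarly for $B$), and preservation of composition reduces to $\mathcal{F}(\psi_A \circ \phi_A) = \mathcal{F}(\psi_A) \circ \mathcal{F}(\phi_A)$ in each slot; both hold because $\mathcal{F}$ is a functor. There is no genuine obstacle here: the entire proof is a direct unpacking of definitions, and the only substantive point, squeezed into a single line, is that functoriality of $\mathcal{F}$ transports commuting squares to commuting squares.
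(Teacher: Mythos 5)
Your proposal is correct and follows essentially the same route as the paper: define $\tilde{\mathcal{F}}$ componentwise, observe that functoriality of $\mathcal{F}$ sends the commuting square for a morphism of $\mathrm{Arr}(\mathcal{C})$ to a commuting square in $\mathcal{D}$, and note that the functor axioms hold slotwise. If anything, you spell out the identity and composition checks slightly more explicitly than the paper does.
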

\begin{proof}
Given a functor $\mathcal{F}: \mathcal{C} \longrightarrow \mathcal{D}$ that assigns to every object $A$ in $\mathcal{C}$ an object $\mathcal{F}(A)$ in $\mathcal{D}$ and to every morphism $f: A\longrightarrow B$ in $\mathcal{C}$ a morphism $\mathcal{F}(f): \mathcal{F}(A) \longrightarrow \mathcal{F}(B)$ in $\mathcal{D}$, we can define a functor $\Tilde{\mathcal{F}}$ that maps every object $f : A\longrightarrow B$ in Arr($\mathcal{C}$) to an object $\Tilde{\mathcal{F}}(f)=\mathcal{F}(f): \mathcal{F}(A) \longrightarrow \mathcal{F}(B)$ in Arr($\mathcal{D}$) and every morphism $(\phi, \psi): f \longrightarrow f'$ in Arr($\mathcal{C}$) to a morphism $\Tilde{\mathcal{F}}(\phi, \psi)=(\mathcal{F}(\phi), \mathcal{F}(\psi)): \mathcal{F}(f) \longrightarrow \mathcal{F}(f')$ in Arr($\mathcal{D}$). This is valid because the diagram
\[\begin{tikzcd}
    \mathcal{F}(A) \arrow{r}{\mathcal{F}(\phi)} \arrow[swap]{d}{\mathcal{F}(f)} &  \arrow{d}{ \mathcal{F}(f')} \mathcal{F}(A') \\
    \mathcal{F}(B) \arrow{r}{\mathcal{F}(\psi)} & \mathcal{F}(B')
\end{tikzcd}
\]
commutes due to functoriality of $\mathcal{F}$. It is easy to show that $\tilde{\mathcal{F}}$ preserves composition and the identity morphism in Arr($\mathcal{C}$).
\end{proof}\noindent
Similarly, one can prove the following statement:
\begin{satz}\label{rem. contravariant functor arrow cat}
  A contravariant functor $\mathcal{F}: \mathcal{C} \longrightarrow \mathcal{D}$ gives rise to a contravariant functor $\Tilde{\mathcal{F}}: \text{Arr}(\mathcal{C}) \longrightarrow \text{Arr}(\mathcal{D})$.  
\end{satz}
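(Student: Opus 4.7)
The plan is to mimic the proof of Proposition~\ref{prop. functor between arrow cat.}, paying careful attention to how contravariance reverses morphisms. On objects, I set $\Tilde{\mathcal{F}}(f : A \to B) := \mathcal{F}(f) : \mathcal{F}(B) \to \mathcal{F}(A)$, which is an object of $\Arr(\mathcal{D})$. Since $\Tilde{\mathcal{F}}$ is to be contravariant, a morphism $(\phi, \psi) : f \to f'$ in $\Arr(\mathcal{C})$ must be sent to a morphism $\Tilde{\mathcal{F}}(f') \to \Tilde{\mathcal{F}}(f)$ in $\Arr(\mathcal{D})$. This forces the definition $\Tilde{\mathcal{F}}(\phi, \psi) := (\mathcal{F}(\psi), \mathcal{F}(\phi))$: the two components swap their positions, because the former domain of each now becomes its codomain under~$\mathcal{F}$.

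Next I would verify that $(\mathcal{F}(\psi), \mathcal{F}(\phi))$ really is a morphism of $\Arr(\mathcal{D})$. Applying $\mathcal{F}$ contravariantly to the commuting square $\psi \circ f = f' \circ \phi$ yields $\mathcal{F}(f) \circ \mathcal{F}(\psi) = \mathcal{F}(\phi) \circ \mathcal{F}(f')$, which is exactly the commutativity of the square in $\Arr(\mathcal{D})$ whose vertical legs are $\mathcal{F}(f')$ and $\mathcal{F}(f)$ and whose horizontal legs are $\mathcal{F}(\psi)$ and $\mathcal{F}(\phi)$. Functoriality then follows: identities are sent to identities since $\mathcal{F}$ preserves them componentwise, and composition is reversed because $\mathcal{F}$ itself reverses composition in each of the two slots.

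Alternatively, one could sidestep this direct construction by invoking Proposition~\ref{prop. functor between arrow cat.} applied to the covariant functor $\mathcal{F} : \mathcal{C}^{\mathrm{op}} \to \mathcal{D}$ corresponding to $\mathcal{F}$, together with a natural isomorphism $\Arr(\mathcal{C}^{\mathrm{op}}) \cong \Arr(\mathcal{C})^{\mathrm{op}}$ that swaps the two components of each morphism-pair. The main subtlety in either approach is keeping track of which component ends up in which slot after the direction reversal; no genuine obstacle arises, as the definition of $\Tilde{\mathcal{F}}$ is essentially forced by the bookkeeping.
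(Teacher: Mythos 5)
Your proof is correct and follows exactly the route the paper intends: the paper omits the argument, remarking only that it is ``similar'' to Proposition~\ref{prop. functor between arrow cat.}, and your construction --- with the crucial swap of components so that $(\phi,\psi)$ is sent to $(\mathcal{F}(\psi),\mathcal{F}(\phi))$, justified by applying $\mathcal{F}$ to the square $\psi\circ f = f'\circ\phi$ --- is precisely the right way to fill in that analogy. If anything, your explicit bookkeeping of the swap is more careful than the paper's own Example~\ref{ex. dualisation functor on Arr Mat}, which elides it.
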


\begin{example} \label{ex. dualisation functor on Arr Mat} The contravariant functor $\mathcal{T}: \mathbf{Mat}(\N) \longrightarrow \mathbf{Mat}(\N)$ which maps each set $v$ to itself and each matrix $Mi: v \longrightarrow b$ to its transpose $M^{T}: b \longrightarrow v$ gives rise to a contravariant functor $\Tilde{\mathcal{T}}: \mathrm{Arr}(\mathbf{Mat}(\N)) \longrightarrow \mathrm{Arr}(\mathbf{Mat}(\N))$ which maps each matrix $M: v\longrightarrow b$ to its transpose $M^{T}: b \longrightarrow v$ and each morphism $(S_v, S_b): M \longrightarrow N$ to its transpose $(S_{v}^{T}, S_{b}^{T}): N^{T} \longrightarrow M^{T}$ such that the following diagram commutes:
\[\begin{tikzcd}
    b' \arrow{r}{S_{v}^{T}} \arrow[swap]{d}{N^{T}} &     \arrow{d}{M^{T}} b \\
    v' \arrow{r}{S^{T}_{v}} & v
\end{tikzcd}
\]
\end{example}
\begin{satz}\label{prop. natural trafo between arrow cats}
Let $\mathcal{C}$ and $\mathcal{D}$ be two categories and let $\mathcal{F}, \mathcal{G}: \mathcal{C} \longrightarrow \mathcal{D}$ be two covariant functors between those categories that induce the functors $\Tilde{\mathcal{F}}, \Tilde{\mathcal{G}}: \mathrm{Arr}(\mathcal{C}) \longrightarrow \mathrm{Arr}(\mathcal{D})$. Consider a natural transformation $\eta: \mathcal{F} \Longrightarrow \mathcal{G}$ between $\mathcal{F}$ and $\mathcal{G}$. Then $\eta$ induces a natural transformation $\Tilde{\eta}: \Tilde{\mathcal{F}} \Longrightarrow \Tilde{\mathcal{G}}$.
\end{satz}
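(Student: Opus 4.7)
The plan is to construct $\tilde{\eta}$ componentwise by promoting each pair of components of $\eta$ to a morphism in the arrow category, and then to verify naturality by reducing to the naturality of $\eta$ on each leg separately.

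First I would specify the components: for an object $h : A \to B$ of $\mathrm{Arr}(\mathcal{C})$, both $\tilde{\mathcal{F}}(h) = \mathcal{F}(h) : \mathcal{F}(A) \to \mathcal{F}(B)$ and $\tilde{\mathcal{G}}(h) = \mathcal{G}(h) : \mathcal{G}(A) \to \mathcal{G}(B)$ are objects of $\mathrm{Arr}(\mathcal{D})$, and I would set
\[
\tilde{\eta}_{h} := (\eta_A,\,\eta_B) : \mathcal{F}(h) \longrightarrow \mathcal{G}(h).
\]
To check this really is a morphism in $\mathrm{Arr}(\mathcal{D})$, I would draw the square with vertices $\mathcal{F}(A), \mathcal{G}(A), \mathcal{F}(B), \mathcal{G}(B)$, horizontal edges $\eta_A$ and $\eta_B$, and vertical edges $\mathcal{F}(h)$ and $\mathcal{G}(h)$; the commutativity $\eta_B \circ \mathcal{F}(h) = \mathcal{G}(h) \circ \eta_A$ is precisely the naturality of $\eta$ applied to $h$.

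Next I would verify naturality of $\tilde{\eta}$. Given a morphism $(\phi_A,\phi_B) : h \to h'$ in $\mathrm{Arr}(\mathcal{C})$, the required equation
\[
\tilde{\mathcal{G}}(\phi_A,\phi_B) \circ \tilde{\eta}_{h} \;=\; \tilde{\eta}_{h'} \circ \tilde{\mathcal{F}}(\phi_A,\phi_B)
\]
is, by Proposition~\ref{prop. functor between arrow cat.} and the definition of composition in $\mathrm{Arr}(\mathcal{D})$, the pair of equations $\mathcal{G}(\phi_A) \circ \eta_A = \eta_{A'} \circ \mathcal{F}(\phi_A)$ and $\mathcal{G}(\phi_B) \circ \eta_B = \eta_{B'} \circ \mathcal{F}(\phi_B)$, which are just the naturality squares of $\eta$ at $\phi_A$ and $\phi_B$.

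I do not expect a real obstacle here: the entire content of the proposition is that the naturality square of $\eta$ at an arrow $h$ is simultaneously (i) a well-defined morphism in $\mathrm{Arr}(\mathcal{D})$ and (ii) assembled naturally in $h$. The only thing that requires any care is bookkeeping the two coordinates of each morphism in $\mathrm{Arr}(\mathcal{C})$ and noting that naturality in the arrow category decomposes into two independent naturality conditions in $\mathcal{C}$, both of which come for free from $\eta$.
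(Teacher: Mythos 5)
Your proposal is correct and follows essentially the same route as the paper: both define $\Tilde{\eta}_h=(\eta_A,\eta_B)$, justify well-definedness by the naturality square of $\eta$ at $h$, and reduce naturality of $\Tilde{\eta}$ to the two coordinate naturality squares of $\eta$ (the paper packages this as a commuting cube, you as two independent equations, which is the same verification).
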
 
\begin{proof}
Let $\eta: \mathcal{F} \Longrightarrow \mathcal{G}$ be a natural transformation that assigns to every object $A$ in $\mathcal{C}$ a morphism $\eta_{A}: \mathcal{F}(A) \longrightarrow \mathcal{G}(A)$ such that for any morphism $f:A \longrightarrow B$ in $\mathcal{C}$ the following diagram (naturality condition) commutes:
\[\begin{tikzcd}
    \mathcal{F}(A) \arrow{r}{\eta_{A}} \arrow[swap]{d}{\mathcal{F}(f)} &  \arrow{d}{ \mathcal{G}(f)} \mathcal{G}(A) \\
    \mathcal{F}(B) \arrow{r}{\eta_{B}} & \mathcal{G}(B)
\end{tikzcd}
\]
Using this, once can assign to every object $f: A \longrightarrow B$ in Arr($\mathcal{C}$) a morphism $\Tilde{\eta}_{f}=(\eta_{A}, \eta_{B}): \Tilde{\mathcal{F}}(f) \longrightarrow \Tilde{\mathcal{G}}(f)$, such that for any morphism \\
$(\phi, \psi): f \longrightarrow f'$ in Arr($\mathcal{C}$), where $f': A' \longrightarrow B'$, the following diagram (naturality condition in the arrow category) commutes:
\[\begin{tikzcd}[row sep=1cm, column sep=1cm, inner sep=2ex]
& \mathcal{F}(A) \arrow[swap]{dl}{\mathcal{F}(\phi)}\arrow{rr}{\eta_{A}} \arrow[dotted]{dd}[yshift=-0.5cm]{\mathcal{F}(f)} & & \mathcal{G}(A) \arrow[swap]{dl}{\mathcal{G}(\phi)} \arrow{dd}{\mathcal{G}(f)} \\
\mathcal{F}(A') \arrow [crossing over]{rr}[xshift= 0.5cm]{\eta_{A'}} \arrow{dd} {\mathcal{F}(f')} & & \mathcal{G}(A') \arrow[crossing over]{dd} [yshift=0.5cm]{\mathcal{G}(f')}\\
& \mathcal{F}(B) \arrow[dotted]{dl}{\mathcal{F}(\psi)} \arrow[dotted]{rr}[xshift= -0.5cm]{\eta_{B}} & & \mathcal{G}(B) \arrow{dl}{\mathcal{G}(\psi)} \\
\mathcal{F}(B') \arrow[swap]{rr}{\eta_{B'}} & & \mathcal{G}(B')\\
\end{tikzcd}
\]
Here the the top, the back, the front and the bottom face commute due to naturality of $\eta$ and the two side faces commute by definition. Hence the whole diagram commutes and we have defined a natural transformation $\Tilde{\eta}: \Tilde{\mathcal{F}} \Longrightarrow \Tilde{\mathcal{G}} $.
\end{proof}\noindent
It is straightforward to verify that the following has to hold:
\begin{satz}\label{rem. natural isomoprhism between arrow cats}
 If $\eta: \mathcal{F} \Longrightarrow \mathcal{G}$ is a natural isomorphism, so is $\Tilde{\eta}: \Tilde{\mathcal{F}} \Longrightarrow \Tilde{\mathcal{G}} $.   
\end{satz}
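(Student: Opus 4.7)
The plan is to show that each component $\tilde{\eta}_f$ of the induced natural transformation is an isomorphism in $\mathrm{Arr}(\mathcal{D})$; once this is established, $\tilde{\eta}$ is automatically a natural isomorphism. Since $\tilde{\eta}_f = (\eta_A, \eta_B)$ for an object $f: A \to B$ in $\mathrm{Arr}(\mathcal{C})$, and since morphisms in an arrow category are pairs, the natural candidate for the inverse is $(\eta_A^{-1}, \eta_B^{-1})$, which exists componentwise because $\eta$ is a natural isomorphism.

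The only non-trivial step is to verify that this pair really is a morphism in $\mathrm{Arr}(\mathcal{D})$, i.e.\ that the square
\[\begin{tikzcd}
\mathcal{G}(A) \arrow{r}{\eta_A^{-1}} \arrow[swap]{d}{\mathcal{G}(f)} & \mathcal{F}(A) \arrow{d}{\mathcal{F}(f)} \\
\mathcal{G}(B) \arrow{r}{\eta_B^{-1}} & \mathcal{F}(B)
\end{tikzcd}\]
commutes, so that $(\eta_A^{-1},\eta_B^{-1})$ constitutes a morphism $\tilde{\mathcal{G}}(f) \to \tilde{\mathcal{F}}(f)$. This follows by inverting the naturality square of $\eta$ at $f$: from $\eta_B \circ \mathcal{F}(f) = \mathcal{G}(f) \circ \eta_A$ one obtains $\mathcal{F}(f) \circ \eta_A^{-1} = \eta_B^{-1} \circ \mathcal{G}(f)$ by pre- and post-composing with the inverses.

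It then remains to check the two composition identities, which reduce to componentwise statements: $(\eta_A^{-1},\eta_B^{-1}) \circ (\eta_A,\eta_B) = (\mathrm{id}_{\mathcal{F}(A)}, \mathrm{id}_{\mathcal{F}(B)})$ is the identity morphism on $\tilde{\mathcal{F}}(f)$, and similarly in the other direction. Hence each $\tilde{\eta}_f$ is invertible in $\mathrm{Arr}(\mathcal{D})$, so by Proposition~\ref{prop. natural trafo between arrow cats} together with this componentwise invertibility, $\tilde{\eta}$ is a natural isomorphism. No genuine obstacle is expected; the main content is simply translating invertibility of $\eta$ into invertibility of $\tilde{\eta}$ componentwise, with the inverse square commuting as a formal consequence of the original naturality square.
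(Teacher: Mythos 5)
Your proof is correct: the paper states this proposition without proof (``it is straightforward to verify''), and your argument --- taking the componentwise inverse $(\eta_A^{-1},\eta_B^{-1})$, checking it is a morphism in $\mathrm{Arr}(\mathcal{D})$ by inverting the naturality square, and noting that componentwise invertibility suffices --- is exactly the standard verification the paper leaves implicit. Nothing further is needed.
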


\begin{theorem}\label{thrm. equivalence arrow categories}
Let $\mathcal{C}$ and $\mathcal{D}$ be two equivalent categories, i. e. there exist a pair of functors $\mathcal{F}: \mathcal{C} \longrightarrow \mathcal{D}$ and $\mathcal{G}: \mathcal{D} \longrightarrow \mathcal{C}$ and natural isomorphisms $\mathcal{F} \circ \mathcal{G} \cong \mathrm{id}_{\mathcal{D}}$ and $\mathcal{G} \circ \mathcal{F} \cong \mathrm{id}_{\mathcal{C}}$. Then Arr($\mathcal{C}$) and Arr($\mathcal{D}$) are also equivalent.
\end{theorem}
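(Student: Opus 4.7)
The plan is to assemble the theorem out of the three immediately preceding propositions: Proposition~\ref{prop. functor between arrow cat.} to transport the functors, Proposition~\ref{prop. natural trafo between arrow cats} to transport the natural transformations, and Proposition~\ref{rem. natural isomoprhism between arrow cats} to ensure the transported transformations are again invertible. First I would apply Proposition~\ref{prop. functor between arrow cat.} to the equivalence data $\mathcal{F}\colon \mathcal{C}\to\mathcal{D}$ and $\mathcal{G}\colon\mathcal{D}\to\mathcal{C}$ to obtain lifted functors $\tilde{\mathcal{F}}\colon \mathrm{Arr}(\mathcal{C})\to\mathrm{Arr}(\mathcal{D})$ and $\tilde{\mathcal{G}}\colon\mathrm{Arr}(\mathcal{D})\to\mathrm{Arr}(\mathcal{C})$, which I claim form the required equivalence.

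The bridge step is the observation that the assignment $\mathcal{F}\mapsto\tilde{\mathcal{F}}$ is strictly functorial in $\mathcal{F}$, in the sense that $\widetilde{\mathcal{F}\circ\mathcal{G}}=\tilde{\mathcal{F}}\circ\tilde{\mathcal{G}}$ and $\widetilde{\mathrm{id}_{\mathcal{C}}}=\mathrm{id}_{\mathrm{Arr}(\mathcal{C})}$. Both equalities are immediate from Proposition~\ref{prop. functor between arrow cat.}, since the action of $\tilde{\mathcal{F}}$ on an object $h\colon A\to B$ is defined by $\tilde{\mathcal{F}}(h)=\mathcal{F}(h)$ and on a morphism $(\phi,\psi)$ by $(\mathcal{F}(\phi),\mathcal{F}(\psi))$, both of which are strictly compatible with composition of functors. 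This is the only place where one needs to do any real checking, and it is essentially bookkeeping from the definitions.

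Next I would feed the natural isomorphisms $\eta\colon\mathcal{F}\circ\mathcal{G}\Rightarrow\mathrm{id}_{\mathcal{D}}$ and $\epsilon\colon\mathcal{G}\circ\mathcal{F}\Rightarrow\mathrm{id}_{\mathcal{C}}$ into Proposition~\ref{prop. natural trafo between arrow cats}, producing natural transformations $\tilde{\eta}\colon\widetilde{\mathcal{F}\circ\mathcal{G}}\Rightarrow\widetilde{\mathrm{id}_{\mathcal{D}}}$ and $\tilde{\epsilon}\colon\widetilde{\mathcal{G}\circ\mathcal{F}}\Rightarrow\widetilde{\mathrm{id}_{\mathcal{C}}}$. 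Using the strict functoriality observation, these are exactly natural transformations $\tilde{\mathcal{F}}\circ\tilde{\mathcal{G}}\Rightarrow\mathrm{id}_{\mathrm{Arr}(\mathcal{D})}$ and $\tilde{\mathcal{G}}\circ\tilde{\mathcal{F}}\Rightarrow\mathrm{id}_{\mathrm{Arr}(\mathcal{C})}$. Proposition~\ref{rem. natural isomoprhism between arrow cats} then upgrades them to natural isomorphisms, which yields the equivalence $\mathrm{Arr}(\mathcal{C})\simeq\mathrm{Arr}(\mathcal{D})$.

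There is no genuine obstacle here: the theorem is really a formal consequence of the three previous propositions once one records that the tilde construction respects composition of functors strictly. The only thing to be careful about is keeping the notation straight between, for example, $\widetilde{\mathcal{F}\circ\mathcal{G}}$ and $\tilde{\mathcal{F}}\circ\tilde{\mathcal{G}}$, so that the lifted natural isomorphisms are recognisable as providing the unit and counit of the arrow-category equivalence.
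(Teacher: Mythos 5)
Your proposal is correct and follows essentially the same route as the paper: lift $\mathcal{F}$ and $\mathcal{G}$ via Proposition~\ref{prop. functor between arrow cat.}, then lift the natural isomorphisms via Propositions~\ref{prop. natural trafo between arrow cats} and~\ref{rem. natural isomoprhism between arrow cats}. Your explicit check that $\widetilde{\mathcal{F}\circ\mathcal{G}}=\tilde{\mathcal{F}}\circ\tilde{\mathcal{G}}$ and $\widetilde{\mathrm{id}_{\mathcal{C}}}=\mathrm{id}_{\mathrm{Arr}(\mathcal{C})}$ is left implicit in the paper, and is a worthwhile addition.
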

\begin{proof}
By Proposition~\ref{prop. functor between arrow cat.} the functors $\mathcal{F}: \mathcal{C} \longrightarrow \mathcal{D}$ and $\mathcal{G}: \mathcal{D} \longrightarrow \mathcal{C}$ give rise to functors $\Tilde{\mathcal{F}}: \mathrm{Arr}(\mathcal{C}) \longrightarrow \mathrm{Arr}(\mathcal{D})$ and $\Tilde{\mathcal{G}}: \mathrm{Arr}(\mathcal{D}) \longrightarrow \mathrm{Arr}(\mathcal{C})$. From Proposition~\ref{rem. natural isomoprhism between arrow cats} we know that the natural isomorphisms $\mathcal{F} \circ \mathcal{G} \cong \mathrm{id}_{\mathcal{D}}$ and $\mathcal{G} \circ \mathcal{F} \cong \mathrm{id}_{\mathcal{C}}$ give rise to natural isomorphisms $\Tilde{\mathcal{F}} \circ \Tilde{\mathcal{G}} \cong \mathrm{id}_{\mathrm{Arr}(\mathcal{D})}$ and $\Tilde{\mathcal{G}} \circ \Tilde{\mathcal{F}} \cong \mathrm{id}_{\mathrm{Arr}(\mathcal{C})}$. Hence we have an equivalence.
\end{proof}
With that, we can define a dagger structure on Arr($\mathcal{C}$), using the dagger structure in $\mathcal{C}$:
\begin{satz}\label{prop. dagger functor}
Let $\mathcal{C}$ be a dagger category and let $\mathcal{C}_{uni}$ be the subcategory where all morphisms are unitary, i. e. all $f \in \mathrm{Hom}_{\C}$ are invertible with $f^{-1} = f^{\dagger}$. Then $\mathrm{Arr}(\mathcal{C}_{uni})$ is a dagger category.
\end{satz}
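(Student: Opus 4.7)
The plan is to define a dagger on $\mathrm{Arr}(\mathcal{C}_{uni})$ by componentwise application of the dagger in $\mathcal{C}$: for a morphism $(\phi_A, \phi_B): h \to h'$ between unitary arrows $h: A \to B$ and $h': A' \to B'$, set $(\phi_A, \phi_B)^\dagger := (\phi_A^\dagger, \phi_B^\dagger)$. Thus the dagger is identity on objects, with only morphisms being reversed. It is worth emphasising that this construction does \emph{not} come from the contravariant functor lift of Proposition~\ref{rem. contravariant functor arrow cat}, since that lift would send the object $h: A \to B$ to $h^\dagger: B \to A$, which is a different object of the arrow category.

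The main step is to verify that $(\phi_A^\dagger, \phi_B^\dagger)$ is a well-formed morphism $h' \to h$ in $\mathrm{Arr}(\mathcal{C}_{uni})$. Each component is again unitary, since the dagger of a unitary is unitary. The key calculation is the commutativity $h \circ \phi_A^\dagger = \phi_B^\dagger \circ h'$. Starting from the commutative square $h' \circ \phi_A = \phi_B \circ h$ of the original morphism and using the unitarity relations $\phi_A \circ \phi_A^\dagger = \mathrm{id}_{A'}$ and $\phi_B^\dagger \circ \phi_B = \mathrm{id}_B$, we compute
\[
\phi_B^\dagger \circ h' \;=\; \phi_B^\dagger \circ h' \circ \phi_A \circ \phi_A^\dagger \;=\; \phi_B^\dagger \circ \phi_B \circ h \circ \phi_A^\dagger \;=\; h \circ \phi_A^\dagger,
\]
as required. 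This is precisely the place where the restriction to $\mathcal{C}_{uni}$ is essential: without invertibility of the components one cannot rearrange the square in this way, and the componentwise dagger would generally fail to respect the square.

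It then remains to check the dagger axioms---involutivity $((\phi_A, \phi_B)^\dagger)^\dagger = (\phi_A, \phi_B)$, contravariant functoriality with respect to composition, preservation of the identity morphism $(\mathrm{id}_A, \mathrm{id}_B)$, and acting as the identity on objects of $\mathrm{Arr}(\mathcal{C}_{uni})$. These all follow immediately by applying the corresponding axioms for $\dagger$ in $\mathcal{C}$ to each component separately. I expect the only substantive content of the proof to be the commutativity calculation above; the remainder is routine bookkeeping.
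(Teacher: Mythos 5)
Your proposal is correct and follows essentially the same route as the paper: the dagger is defined componentwise, acting as the identity on objects, and the only substantive point is that unitarity of the components lets the reversed square commute. Your explicit computation $\phi_B^\dagger \circ h' = \phi_B^\dagger \circ h' \circ \phi_A \circ \phi_A^\dagger = \phi_B^\dagger \circ \phi_B \circ h \circ \phi_A^\dagger = h \circ \phi_A^\dagger$ is in fact a cleaner justification than the paper's brief remark, and your observation that this is not the contravariant lift of Proposition~\ref{rem. contravariant functor arrow cat} is a worthwhile clarification.
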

\begin{proof}
Let $\mathcal{C}$ be a dagger category, i. e. there exists an involutive contravariant functor $\dagger: \mathcal{C} \longrightarrow \mathcal{C}$ such that
\begin{align*}
    (g\circ f)^{\dagger}= f^{\dagger} \circ g^{\dagger}\\
    \mathrm{id}^{\dagger}= \mathrm{id} \\
    (f^{\dagger})^{\dagger}=f.
\end{align*}
Restricting to the subcategory where all morphisms are unitary, i. e. we have for each morphism $\psi: A \longrightarrow A'$ in $\mathcal{C}$:
\begin{align}
    \psi \circ \psi^{\dagger}= \mathrm{id}_{A'} \ \ \ \text{and}\\
     \psi^{\dagger} \circ \psi= \mathrm{id}_{A}
\end{align}
and taking the arrow category of $\C$, we can construct a functor $\mathrm{Arr}^{\dagger}:\mathrm{Arr}(\mathcal{C}_{uni}) \longrightarrow \mathrm{Arr}(\mathcal{C}_{uni})$ which sends each object $f:A \longrightarrow B$ in $\mathrm{Arr}(\mathcal{C}_{uni})$ to itself and each morphism 
\[\begin{tikzcd}
    A \arrow{r}{\psi} \arrow[swap]{d}{f} & \arrow{d}{g} A' \\
   B \arrow{r}{\psi'} & B'
\end{tikzcd}
\]
to its adjoint $(\psi, \psi')^{\dagger}= (\psi^{\dagger}, \psi'^{\dagger})$:
\[\begin{tikzcd}
    A' \arrow{r}{\psi^{\dagger}} \arrow[swap]{d}{f} & \arrow{d}{g} A \\
   B' \arrow{r}{\psi'^{\dagger}} & B
\end{tikzcd}
\]
The above diagram commutes because we have $\psi' \circ f = g \circ \psi$ per definition and $\psi \circ \psi^{\dagger} = \mathrm{id}_{B'}$.
It is easy to verify that this construction fulfils the requirements for a dagger functor.
\end{proof}

\subsection{Monoidal Products and Braidings in Arrow Categories}\label{sec: monoidal product in arrow cats}
In this section we will define a braiding in arrow categories and demonstrate that a monoidal functor between two categories induces a monoidal functor between their arrow categories. Moreover, we will show that, given a symmetric monoidal category $\mathcal{C}$, its arrow category is also symmetric.\\\\
The following statement can be found in a similar manner in~\cite{white2018arrow}. We will still give its proof. 
\begin{satz}\label{prop. monoidal product in arrow cats}
If $\mathcal{C}$ is a monoidal category, we can use the monoidal product in $\mathcal{C}$ to define a pointwise monoidal product in Arr($\mathcal{C}$). 
\end{satz}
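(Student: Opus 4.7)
My plan is to construct all of the monoidal data on Arr($\mathcal{C}$) pointwise from the data of $\mathcal{C} = (\mathcal{C}, \otimes, I, \alpha, \lambda, \rho)$, and then verify each axiom by reducing it componentwise to the corresponding axiom in $\mathcal{C}$. On objects, I would set $h \,\tilde{\otimes}\, h' := h \otimes h' : A \otimes A' \to B \otimes B'$ for arrow objects $h: A \to B$ and $h': A' \to B'$. On morphisms, given $(\phi_A, \phi_B): h \to k$ and $(\psi_{A'}, \psi_{B'}): h' \to k'$, I would set their tensor to the pair $(\phi_A \otimes \psi_{A'},\; \phi_B \otimes \psi_{B'})$. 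The first thing to check is that this pair is still a legitimate morphism of Arr($\mathcal{C}$), i.e., that the evident square with $h \otimes h'$ and $k \otimes k'$ on the vertical sides commutes; this follows immediately by applying the bifunctor $\otimes$ of $\mathcal{C}$ to the two commuting squares that witness $(\phi_A,\phi_B)$ and $(\psi_{A'}, \psi_{B'})$. Bifunctoriality of $\tilde{\otimes}$ (preservation of identities and composition) then follows componentwise from bifunctoriality of $\otimes$ in $\mathcal{C}$.

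Next, I would take the monoidal unit in Arr($\mathcal{C}$) to be $\mathrm{id}_I : I \to I$ and define the coherence isomorphisms as $\tilde{\alpha}_{h, h', h''} := (\alpha_{A, A', A''}, \alpha_{B, B', B''})$, $\tilde{\lambda}_h := (\lambda_A, \lambda_B)$, and $\tilde{\rho}_h := (\rho_A, \rho_B)$. For each of these, the crucial check is that the proposed pair actually constitutes an arrow morphism in the sense of Definition~\ref{def. arrow category}, i.e., that the defining commuting square between $(h \otimes h') \otimes h''$ and $h \otimes (h' \otimes h'')$ (respectively $I \otimes h$, $h$, $h \otimes I$) holds. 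This reduces to naturality of $\alpha$, $\lambda$, and $\rho$ in $\mathcal{C}$ applied to the morphisms $h, h', h''$; this is exactly the step where the data of $h$ as a \emph{morphism} (rather than merely a pair of objects) is used. Naturality of $\tilde{\alpha}, \tilde{\lambda}, \tilde{\rho}$ with respect to arrow morphisms then also follows componentwise from naturality of $\alpha, \lambda, \rho$ in $\mathcal{C}$.

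The pentagon and triangle axioms in Arr($\mathcal{C}$) are equalities between arrow morphisms, and equality of arrow morphisms is by definition componentwise equality of the underlying pairs; each component equality is then an instance of the pentagon or triangle identity in $\mathcal{C}$. I do not expect a serious obstacle: the proof is a templated ``lift everything componentwise'' argument. The only point that does not come entirely for free, and that I would emphasise in the write-up, is that the naturality of $\alpha, \lambda, \rho$ in $\mathcal{C}$ is exactly what makes the candidate coherence data land inside Arr($\mathcal{C}$) in the first place.
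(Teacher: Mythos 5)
Your proposal is correct and follows essentially the same route as the paper: the tensor, unit, and coherence isomorphisms are all defined componentwise, the key observation that naturality of $\alpha$, $\lambda$, $\rho$ in $\mathcal{C}$ is what makes the candidate coherence pairs into legitimate morphisms of Arr($\mathcal{C}$) is exactly the content of the paper's appeal to Proposition~\ref{rem. natural isomoprhism between arrow cats}, and the pentagon and triangle axioms reduce componentwise to their counterparts in $\mathcal{C}$ just as in the paper's appendix. No gaps.
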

\begin{proof}
   Let $(\mathcal{C}, \otimes_{\mathcal{C}}, \mathbb{I}_{\mathcal{C}}, \lambda_{\mathcal{C}}, \rho_{\mathcal{C}})$ be a monoidal category and Arr($\mathcal{C}$) its arrow category. A monoidal structure in Arr($\mathcal{C}$) can be defined as follows:
\begin{itemize}
\item {On objects we have
    \begin{align*}
       (f: A_{1} \longrightarrow B_{1}) \otimes_{\mathrm{Arr}(\mathcal{C})} (g: A_{2} \longrightarrow B_{2}) := f \otimes_{\mathcal{C}} g : A_{1} \otimes_{\mathcal{C}} A_{2} \longrightarrow B_{1} \otimes_{\mathcal{C}} B_{2}
    \end{align*}
where $f:A_{1} \longrightarrow B_{1}$ and $g:A_{2} \longrightarrow B_{2}$ are objects in Arr($\mathcal{C}$).}
\item {On morphisms $(\phi_{A_{1}}, \phi_{B_{1}})$ and $(\phi_{A_{2}}, \phi_{B_{2}})$, where $\phi_{A_{i}}: A_{i} \longrightarrow A_{i}'$ and  $\phi_{B_{i}}: B_{i} \longrightarrow B_{i}'$ for $i=1,2$, we have
\begin{align}
    (\phi_{A_{1}}, \phi_{B_{1}})\otimes_{\mathrm{Arr}(\mathcal{C})} (\phi_{A_{2}}, \phi_{B_{2}}) := (\phi_{A_{1}}\otimes_{\mathcal{C}}\phi_{A_{2}}, \phi_{B_{1}}\otimes_{\mathcal{C}}\phi_{B_{2}})
\end{align}
such that the following diagram commutes:
\[\begin{tikzcd}
    A_{1}\otimes_{\mathcal{C}} A_{2} \arrow{r}{f \otimes_{\mathcal{C}} g} \arrow[swap]{d}{\phi_{A_{1}}\otimes_{\mathcal{C}}\phi_{A_{2}}} &     \arrow{d}{\phi_{B_{1}'}\otimes_{\mathcal{C}}\phi_{B_{2}'}}  B_{1} \otimes_{\mathcal{C}} B_{2} \\
   A_{1}'\otimes_{\mathcal{C}} A_{2}' \arrow{r}{f' \otimes_{\mathcal{C}} g'} &  B_{1}' \otimes_{\mathcal{C}} B_{2}' 
\end{tikzcd}
\]}
\item{The unit object is given by the identity morphism on the monoidal unit in $\mathcal{C}$:
\begin{align}
    \mathrm{id}_{\mathbb{I}_{\mathcal{C}}}:\mathbb{I}_{\mathcal{C}}\longrightarrow \mathbb{I}_{\mathcal{C}}.
\end{align}}
\item{Since left- and right-unitors are natural isomorphisms one can use Propisition~\ref{rem. natural isomoprhism between arrow cats} to define
\begin{align}
    \lambda_{f}: \mathrm{id}_{\mathbb{I}_{\mathcal{C}}}\otimes_{\mathcal{C}} f  \longrightarrow f
\end{align}
and 
\begin{align}
    \rho_{f}: f \otimes_{\mathcal{C}}\mathrm{id}_{\mathbf{I}_{\mathcal{C}}} \longrightarrow f
\end{align}
where $f: A \longrightarrow B$, i. e. the following diagrams commute:
\[\begin{tikzcd}
    A\otimes_{\mathcal{C}} \mathbb{I}_{\mathcal{C}} \arrow{r}{\rho_{A}} \arrow[swap]{d}{f\otimes_{\mathcal{C}} \mathrm{id}_{\mathbb{I}_{\mathcal{C}}}} &     \arrow{d}{f} A \\
    B \otimes_{\mathcal{C}} \mathbb{I}_{\mathcal{C}} \arrow{r}{\rho_{B}} & B
\end{tikzcd}\]

\[\begin{tikzcd}
    \mathbb{I}_{\mathcal{C}} \otimes_{\mathcal{C}} A \arrow{r}{\lambda_{A}} \arrow[swap]{d}{\mathrm{id}_{\mathbb{I}_{\mathcal{C}}}\otimes_{\mathcal{C}} f} &  \arrow{d}{ f} A \\
\mathbb{I}_{\mathcal{C}}\otimes_{\mathcal{C}} B \arrow{r}{\lambda_{B}} & B
\end{tikzcd}
\]}
\item{Finally, the associator is a natural isomorphism
\begin{align}
    \alpha: (f_{1}\otimes_{\mathcal{C}} f_{2})\otimes_{\mathcal{C}} f_{3} \longrightarrow f_{1}\otimes_{\mathcal{C}} (f_{2}\otimes_{\mathcal{C}} f_{3})
\end{align}
where $f_{i}: A_{i} \longrightarrow B_{i}$ for $i=1,2,3$, such that the diagram
\[\begin{tikzcd}
    (A_{1} \otimes_{\mathcal{C}} A_{2})\otimes_{\mathcal{C}} A_{3} \arrow{r}{\alpha_{A_{1}, A_{2}, A_{3}}} \arrow[swap]{d}{(f_{1}\otimes_{\mathcal{C}} f_{2})\otimes_{\mathcal{C}} f_{3}} &  \arrow{d}{f_{1}\otimes_{\mathcal{C}} (f_{2}\otimes_{\mathcal{C}} f_{3})}  A_{1} \otimes_{\mathcal{C}} (A_{2}\otimes_{\mathcal{C}} A_{3})\\
     (B_{1} \otimes_{\mathcal{C}} B_{2})\otimes_{\mathcal{C}} B_{3} \arrow{r}{\alpha_{B_{1}, B_{2}, B_{3}}} &  B_{1} \otimes_{\mathcal{C}} (B_{2}\otimes_{\mathcal{C}} B_{3})
\end{tikzcd}
\]
commutes.}
\end{itemize}
We still have to show that the pentagon and the triangle axiom are satisfied. The proof for this can be found in the appendix.
\end{proof}
\begin{example}\label{ex. monoidal product Mat(N)}
The monoidal product in Arr(\textbf{Mat}($\mathbb{N}$)) is defined on objects $M: v_1 \longrightarrow b_1$, $N: v_2 \longrightarrow b_2$ via the Kronecker product of matrices: $M\otimes N : v_1 \cdot v_2 \to b_1 \cdot b_2$. On morphisms the monoidal product is defined as follows: $(S_{v_1}, S_{b_1}) \otimes (S_{v_2}, S_{b_2}) = (S_{v_1}\otimes S_{v_2}, S_{b_1} \otimes S_{b_2} )$, i. e. we have the commutative diagram:
  \[\begin{tikzcd}
   v_{1} \cdot v_{2} \arrow{r}{S_{v_{1}} \otimes S_{v_{2}}} \arrow[swap]{d}{M \otimes N} &  \arrow{d}{ M' \otimes N'}  v_{1}' \cdot v_{2}' \\
    b_{1} \cdot b_{2} \arrow{r}{ S_{b_{1}} \otimes S_{b_{2}}} & b_{1}' \cdot b_{2}'
\end{tikzcd} 
\]
The tensor unit is given by the $1\times1$-matrix:
\begin{align}
\mathbb{I}: 1 \longrightarrow 1.
\end{align}
\end{example}
In a similar way one can define a braiding in Arr($\mathcal{C}$) using the braiding in $\mathcal{C}$.
\begin{satz}\label{prop. braiding in arrow categories}
If $\mathcal{C}$ is a braided monoidal category with braiding $\sigma_{A,B}: A \otimes_{\mathcal{C}} B \longrightarrow B \otimes_{\mathcal{C}} A$, then Arr($\mathcal{C})$ has a braiding given by $\sigma_{f,g}=(\sigma_{A,C}, \sigma_{B,D}): f \otimes_{\mathcal{C}}g \longrightarrow g \otimes_{\mathcal{C}} f$ for $f: A \longrightarrow B$ and $g: C \longrightarrow D$, i. e. we have the following commutative diagram:
\[\begin{tikzcd}
   A \otimes_{\mathcal{C}} C \arrow{r}{\sigma_{A,C}} \arrow[swap]{d}{f\otimes_{\mathcal{C}} g} &  \arrow{d}{ g\otimes_{\mathcal{C}}f}  C\otimes_{\mathcal{C}} A  \\
    B \otimes_{\mathcal{C}} D \arrow{r}{\sigma_{B,D}} & D \otimes_{\mathcal{C}} B
\end{tikzcd} 
\]
\end{satz}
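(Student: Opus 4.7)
The plan is to verify three things in succession: that the prescribed pair $\sigma_{f,g} = (\sigma_{A,C}, \sigma_{B,D})$ is actually a morphism in $\mathrm{Arr}(\mathcal{C})$, that the family $\{\sigma_{f,g}\}$ is natural in $(f,g)$ as a natural isomorphism between the two bifunctors $\otimes_{\mathrm{Arr}(\mathcal{C})}$ and $\otimes_{\mathrm{Arr}(\mathcal{C})} \circ \tau$ on $\mathrm{Arr}(\mathcal{C}) \times \mathrm{Arr}(\mathcal{C})$, and finally that the two hexagon axioms hold.

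First I would check well-definedness. The square displayed in the statement reads $\sigma_{B,D} \circ (f \otimes_{\mathcal{C}} g) = (g \otimes_{\mathcal{C}} f) \circ \sigma_{A,C}$, and this is precisely the naturality square of the braiding $\sigma$ in $\mathcal{C}$ instantiated at the pair of morphisms $f: A \to B$ and $g: C \to D$. Hence $\sigma_{f,g}$ is a legitimate morphism $f \otimes_{\mathrm{Arr}(\mathcal{C})} g \to g \otimes_{\mathrm{Arr}(\mathcal{C})} f$. Invertibility is immediate: the candidate inverse $(\sigma_{C,A}^{-1}, \sigma_{D,B}^{-1})$ is a morphism in $\mathrm{Arr}(\mathcal{C})$ by the same naturality argument applied to $\sigma^{-1}$, and the composition with $\sigma_{f,g}$ in either order reduces componentwise to identities in $\mathcal{C}$.

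Next I would verify naturality. Given morphisms $(\phi_A,\phi_B): f \to f'$ and $(\phi_C,\phi_D): g \to g'$ in $\mathrm{Arr}(\mathcal{C})$, the required naturality square in $\mathrm{Arr}(\mathcal{C})$ is an equation of pairs whose two components are the naturality squares of $\sigma$ in $\mathcal{C}$ for the pairs $(\phi_A,\phi_C)$ and $(\phi_B,\phi_D)$ respectively. Since both hold in $\mathcal{C}$, naturality in $\mathrm{Arr}(\mathcal{C})$ follows. This is the same componentwise-reduction pattern used in Proposition~\ref{rem. natural isomoprhism between arrow cats}.

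Finally, for the two hexagon axioms, I would exploit that the monoidal product, the associator (from Proposition~\ref{prop. monoidal product in arrow cats}), and the braiding in $\mathrm{Arr}(\mathcal{C})$ are all defined componentwise on the domain and codomain objects of arrows. Consequently, each hexagon equation in $\mathrm{Arr}(\mathcal{C})$ for objects $f_i: A_i \to B_i$ ($i = 1,2,3$) is, by definition of composition in $\mathrm{Arr}(\mathcal{C})$, the pair consisting of the hexagon in $\mathcal{C}$ on the triple $(A_1,A_2,A_3)$ and the hexagon in $\mathcal{C}$ on $(B_1,B_2,B_3)$. Both hold because $\mathcal{C}$ is braided. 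I do not expect any real obstacle here; the only moment of actual content is the opening observation that the well-definedness square is literally the naturality of $\sigma$ in $\mathcal{C}$, after which everything else reduces componentwise to the braided structure of $\mathcal{C}$.
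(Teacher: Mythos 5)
Your proposal is correct and follows essentially the same route as the paper: the well-definedness square for $\sigma_{f,g}$ is the naturality square of $\sigma$ in $\mathcal{C}$, naturality and invertibility in $\mathrm{Arr}(\mathcal{C})$ are inherited componentwise (the paper routes this through Proposition~\ref{rem. natural isomoprhism between arrow cats}), and the hexagons reduce to the hexagons in $\mathcal{C}$ on the domain and codomain objects. The paper presents this last reduction as one large prism-shaped diagram whose top and bottom faces are the two hexagons in $\mathcal{C}$ and whose connecting squares are the well-definedness conditions, whereas you state the componentwise-equality principle directly; the content is the same.
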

\begin{proof}
We can use Prop. \ref{rem. natural isomoprhism between arrow cats} to define a natural isomorphism  $\sigma_{f,g}=(\sigma_{A,C}, \sigma_{B,D})$ on Arr($\mathcal{C})$ using the natural isomorphisms $\sigma_{A,C}$ and $\sigma_{B,D}$. In order to define a braiding,  $\sigma_{f,g}=(\sigma_{A,C}, \sigma_{B,D})$ has to satisfy the hexagon identities. Here we will only discuss one of them as the other one can be proved analogously. In the diagram below the top and the bottom face commute because  $\sigma_{A,C}$ resp. $\sigma_{B,D}$ is a braiding in $\mathcal{C}$. The left side face and the right square in the front commute, because the braiding in $\mathcal{C}$ is natural and because of the definition of the monoidal product in Arr($\mathcal{C}$). The left square in the front commutes because the associator is natural in $\mathcal{C}$. Similarly, the back face commutes because of naturality of the associator in $\mathcal{C}$. Finally, the right side face commutes due to naturality of the braiding in $\mathcal{C}$ and hence the whole diagram commutes.\footnote{We have again abbreviated the labels of the monoidal products and the associator in order to make the diagram more readable.}
\[
\def\stimes{{\otimes}}
\hspace{-3.5cm}
\small
 \begin{tikzcd}[ampersand replacement=\&, row sep=0.7cm, column sep=0.7cm]
\& (A \otimes D) \otimes X
\arrow[swap]{dl}{\sigma_{A, D} \otimes \mathrm{id}_{X}}\arrow{rrrrrr}[xshift=-0.5cm]{\alpha} \arrow[dotted]{dd}[yshift=-0.5cm]{(f \otimes g) \otimes h} \& \& \& \& \& \& A \otimes (D \otimes X)  \arrow[swap]{dl}{\sigma_{A, D, X}} \arrow{dd}{f\otimes(g \otimes h)} \\
(D \otimes A) \otimes X \arrow [crossing over]{rr}[xshift= 1cm]{\alpha} \arrow{dd} [yshift= 0.5cm] {(g \otimes f) \otimes h} \& \&  D\otimes(A \otimes X) \arrow {dd}[yshift= 0.5 cm]{g \otimes (f \otimes h)} \arrow{rr}{\mathrm{id}_{D} \otimes \sigma_{A,X}} \& \& \arrow{dd}[yshift= 0.5cm]{g \otimes (h \otimes f)} D \otimes ( X \otimes A)  \& \& \arrow{ll}{\alpha} (D \otimes X)\otimes A \arrow[crossing over]{dd}[yshift= 0.5cm]{(g \otimes h) \otimes f}\\
\& (B \otimes C) \otimes Y
\arrow[dotted]{dl}{\sigma_{B, C} \otimes \mathrm{id}_{Y}}\arrow[dotted]{rrrrrr}[xshift=-0.5cm]{\alpha}  \& \& \& \& \& \& B \otimes (C \otimes Y)  \arrow[swap]{dl}{\sigma_{B, C, Y}}  \\
(C \otimes B) \otimes Y \arrow [crossing over]{rr}[xshift= 0.5cm]{\alpha}  \& \& C\otimes (B \otimes Y)  \arrow{rr}{\mathrm{id}_{C} \otimes \sigma_{B,Y}} \& \&  C \otimes ( Y \otimes B) \& \& \arrow{ll}{\alpha} (C \otimes Y)\otimes B
\end{tikzcd}
\]

\end{proof}

\begin{example}
On Arr(\textbf{Mat}($\mathbb{N}$)) one can define a braiding via
\begin{align}
    \sigma_{M,N}= (\sigma_{v_{1}, v_{2}}, \sigma_{b_{1}, b_{2}}): M \otimes N \longrightarrow N \otimes M
\end{align}
where $M$ is a $v_{1}\cdot v_{2} \times b_{1} \cdot b_{2}$-matrix and $N$ is a $v_{2}\cdot v_{1} \times b_{2}\cdot b_{1}$-matrix.
\end{example}
\begin{theorem}\label{thrm. symmteric arrow categories}
If $\mathcal{C}$ is a symmetric monoidal category, then so is Arr($\mathcal{C}$).
\end{theorem}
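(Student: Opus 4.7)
The plan is to take the braiding $\sigma_{f,g} = (\sigma_{A,C}, \sigma_{B,D})$ on $\mathrm{Arr}(\mathcal{C})$ constructed in Proposition~\ref{prop. braiding in arrow categories} and verify that it is its own inverse, which is the defining property of a symmetric braiding. Since all structure in $\mathrm{Arr}(\mathcal{C})$ has been defined componentwise, the verification should reduce immediately to symmetry of the underlying braiding $\sigma$ in $\mathcal{C}$.

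First I would recall that composition in $\mathrm{Arr}(\mathcal{C})$ is computed componentwise, i.e.\ $(\phi_A, \phi_B) \circ (\psi_A, \psi_B) = (\phi_A \circ \psi_A, \phi_B \circ \psi_B)$, and that the identity on an object $f: A \longrightarrow B$ is $(\mathrm{id}_A, \mathrm{id}_B)$. Next I would write down the composite
\[
\sigma_{g,f} \circ \sigma_{f,g} = (\sigma_{C,A}, \sigma_{D,B}) \circ (\sigma_{A,C}, \sigma_{B,D}) = (\sigma_{C,A} \circ \sigma_{A,C},\ \sigma_{D,B} \circ \sigma_{B,D}).
\]
Symmetry of $\mathcal{C}$ gives $\sigma_{C,A} \circ \sigma_{A,C} = \mathrm{id}_{A \otimes C}$ and $\sigma_{D,B} \circ \sigma_{B,D} = \mathrm{id}_{B \otimes D}$, and this pair is precisely the identity morphism on the object $f \otimes_{\mathrm{Arr}(\mathcal{C})} g$. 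Hence $\sigma_{f,g}$ is its own inverse in $\mathrm{Arr}(\mathcal{C})$.

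There is essentially no obstacle here: the hexagon axioms and naturality of $\sigma_{f,g}$ were already established in Proposition~\ref{prop. braiding in arrow categories}, so the only new content is the self-inverse condition, which follows componentwise. The only thing worth explicitly noting is that the pair $(\sigma_{A,C}, \sigma_{B,D})$ really does define a morphism in $\mathrm{Arr}(\mathcal{C})$ in the reverse direction as well, but this is immediate because $\sigma$ being a natural isomorphism in $\mathcal{C}$ transfers to $\mathrm{Arr}(\mathcal{C})$ via Proposition~\ref{rem. natural isomoprhism between arrow cats}, so $\sigma_{g,f}$ is a well-defined morphism in $\mathrm{Arr}(\mathcal{C})$ and the composition above makes sense.
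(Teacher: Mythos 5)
Your proof is correct and follows essentially the same route as the paper: both reduce the symmetry condition $\sigma_{g,f}\circ\sigma_{f,g}=\mathrm{id}_{f\otimes g}$ to the componentwise identities $\sigma_{C,A}\circ\sigma_{A,C}=\mathrm{id}_{A\otimes C}$ and $\sigma_{D,B}\circ\sigma_{B,D}=\mathrm{id}_{B\otimes D}$ supplied by symmetry of $\mathcal{C}$, the paper merely presenting this as a commuting prism diagram rather than as an equation of pairs. Your explicit remark that $\sigma_{g,f}$ is a well-defined morphism of $\mathrm{Arr}(\mathcal{C})$ is a small but welcome addition.
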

\begin{proof}
If $\mathcal{C}$ is symmetric the braiding fulfils the following condition for arbitrary objects $A$, $C$ in $\mathcal{C}$
\begin{align}
    \sigma_{A,C} \circ \sigma_{C,A} = \mathrm{id}_{A \otimes_{\mathcal{C}}C}.
\end{align}
In Arr($\mathcal{C}$) symmetry would require that
\begin{align*}
    \sigma_{f,g}\circ \sigma_{g,f}=\mathrm{id}_{f\otimes_{Arr(\mathcal{C})}g}
\end{align*}
where $f$ and $g$ are morphisms in $\mathcal{C}$. In others words, the following diagram has to commute
\[\begin{tikzcd}[row sep=scriptsize, column sep=scriptsize]
A\otimes_{\mathcal{C}} D \arrow{dd}{f\otimes_{\mathcal{C}} g} \arrow{dr}{\sigma_{A,D} } \arrow{rr}{\mathrm{id}_{A\otimes_{\mathcal{C}} D}} & &  A\otimes_{\mathcal{C}} D \arrow{dd}{f\otimes_{\mathcal{C}} g} \arrow{dl}{\sigma_{A,D} } \\
&   D \otimes_{\mathcal{C}} A \arrow{dd}[yshift= 0.5 cm]{g\otimes_{\mathcal{C}} f} & \\
B\otimes_{\mathcal{C}} C \arrow{dr}{\sigma_{B,C} } \arrow[dotted]{rr}[xshift= -0.5cm]{\mathrm{id}_{B\otimes_{\mathcal{C}}C}} & & B\otimes_{\mathcal{C}} C \arrow{dl}{\sigma_{B,C}} \\
 & C\otimes_{\mathcal{C}} B  & 
\end{tikzcd} 
\]
The top and the bottom face commute because $\mathcal{C}$ is symmetric and the two side faces commute as the braiding in $\mathcal{C}$ is natural. The back face commutes due to the definition of the identity morphism in Arr($\mathcal{C}$) and hence the whole diagram commutes which proves the assumption.
\end{proof}
As it is possible to define a monoidal product in arrow categories, one can ask if a monoidal functor between two categories gives rise to a monoidal functor between their arrow categories. This is indeed the case, as the following proposition shows:
\begin{satz}\label{prop. monoidal functors in arrow categories}
Let $(\mathcal{C}, \otimes_{\mathcal{C}}, \mathbb{I}_{\mathcal{C}})$ and $(\mathcal{D}, \otimes_{\mathcal{D}}, \mathbb{I}_{\mathcal{D}})$ be two monoidal categories and $(\mathcal{F}, \mathcal{F}_{2}, \mathcal{F}_{0}) $ be a monoidal functor between them. Then $(\mathcal{F}, \mathcal{F}_{2}, \mathcal{F}_{0}) $ gives rise to a monoidal functor $(\Tilde{\mathcal{F}}, \Tilde{\mathcal{F}}_{2}, \Tilde{\mathcal{F}}_{0})$ between Arr($\mathcal{C}$) and Arr($\mathcal{D}$).
\end{satz}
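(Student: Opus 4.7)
The plan is to lift $\mathcal{F}_2$ and $\mathcal{F}_0$ componentwise and then observe that every coherence diagram in $\Arr(\mathcal{D})$ splits into two copies of the analogous diagram in $\mathcal{D}$. Proposition~\ref{prop. functor between arrow cat.} already supplies the underlying functor $\Tilde{\mathcal{F}}: \Arr(\mathcal{C}) \to \Arr(\mathcal{D})$, so what remains is to define the coherence data $\Tilde{\mathcal{F}}_2$ and $\Tilde{\mathcal{F}}_0$ and verify the monoidal functor axioms.

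For objects $f: A_1 \to B_1$ and $g: A_2 \to B_2$ in $\Arr(\mathcal{C})$, the definitions of the pointwise monoidal product and of $\Tilde{\mathcal{F}}$ give
\[
\Tilde{\mathcal{F}}(f) \otimes_{\Arr(\mathcal{D})} \Tilde{\mathcal{F}}(g) = \mathcal{F}(f) \otimes_{\mathcal{D}} \mathcal{F}(g), \qquad \Tilde{\mathcal{F}}(f \otimes_{\Arr(\mathcal{C})} g) = \mathcal{F}(f \otimes_{\mathcal{C}} g),
\]
so I would set $\Tilde{\mathcal{F}}_{2,f,g} := (\mathcal{F}_{2,A_1,A_2},\mathcal{F}_{2,B_1,B_2})$ and $\Tilde{\mathcal{F}}_0 := (\mathcal{F}_0,\mathcal{F}_0)$. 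The first thing to check is that $\Tilde{\mathcal{F}}_{2,f,g}$ really is a morphism in $\Arr(\mathcal{D})$, i.e.\ that
\[
\mathcal{F}(f \otimes g) \circ \mathcal{F}_{2,A_1,A_2} = \mathcal{F}_{2,B_1,B_2} \circ (\mathcal{F}(f) \otimes \mathcal{F}(g));
\]
but this is exactly the naturality square of $\mathcal{F}_2$ applied to the pair $(f,g)$, and the analogous commutativity for $\Tilde{\mathcal{F}}_0$ is trivial since $\Tilde{\mathcal{F}}(\mathrm{id}_{\I_{\mathcal{C}}}) = \mathrm{id}_{\mathcal{F}(\I_{\mathcal{C}})}$.

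It then remains to verify naturality of $\Tilde{\mathcal{F}}_2$ in both arguments together with the associativity pentagon and unit triangles of a monoidal functor inside $\Arr(\mathcal{D})$. These checks are mechanical: because morphisms in $\Arr(\mathcal{D})$ are pairs of morphisms in $\mathcal{D}$, and because all relevant structure on $\Arr(\mathcal{D})$ (associator, unitors, $\Tilde{\mathcal{F}}_2$ and $\Tilde{\mathcal{F}}_0$) is defined componentwise, each diagram to be verified decomposes into a pair of diagrams in $\mathcal{D}$ that are precisely the corresponding naturality square or coherence axiom of $(\mathcal{F},\mathcal{F}_2,\mathcal{F}_0)$, evaluated at the source and target objects of the arrows involved. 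The only step calling for genuine input is therefore the well-definedness of $\Tilde{\mathcal{F}}_{2,f,g}$ noted above, and that is handled for free by the naturality of $\mathcal{F}_2$; everything else is bookkeeping. I expect that writing out the pentagon axiom as a three-dimensional diagram (in the style of the braiding proof earlier in this section) will be the main source of notational overhead, but not of any real mathematical difficulty.
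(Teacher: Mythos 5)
Your proposal is correct and follows essentially the same route as the paper: the coherence data are lifted componentwise as $\Tilde{\mathcal{F}}_{2,f,g} = (\mathcal{F}_{2,A_1,A_2},\mathcal{F}_{2,B_1,B_2})$ and $\Tilde{\mathcal{F}}_0 = (\mathcal{F}_0,\mathcal{F}_0)$, well-definedness is exactly the naturality square of $\mathcal{F}_2$, and the associativity and unitality axioms reduce to the corresponding axioms for $(\mathcal{F},\mathcal{F}_2,\mathcal{F}_0)$ in each component. The paper simply writes out the resulting three-dimensional diagrams explicitly where you summarize them as componentwise bookkeeping.
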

\begin{proof}
We have already shown that a functor $\mathcal{F}$ between two categories $\mathcal{C}$ and $\mathcal{D}$ induces a functor  $\Tilde{\mathcal{F}}$ between their arrow categories and that a natural transformation $\eta: \mathcal{F} \Longrightarrow \mathcal{G}$ gives rise to a natural transformation $\Tilde{\eta}: \Tilde{\mathcal{F}} \Longrightarrow \Tilde{\mathcal{G}}$. Hence we can use the natural transformations $\mathcal{F}_{2}$ and $\mathcal{F}_{0}$ to define natural transformations 
\begin{align}
    \Tilde{\mathcal{F}}_{2}= (\mathcal{F}_{2, A, C}, \mathcal{F}_{2, B, D}): \Tilde{\mathcal{F}}(f) \otimes_{\mathcal{D}} \Tilde{\mathcal{F}}(g) \longrightarrow  \Tilde{\mathcal{F}}(f\otimes_{\mathcal{C}} g)
\end{align}
for $f: A \longrightarrow B$ and $g: C \longrightarrow D$
and 
\begin{align}
    \Tilde{\mathcal{F}}_{0}=(\mathcal{F}_{0}, \mathcal{F}_{0}): \mathrm{id}_{\mathbb{I}_{\mathcal{D}}} \longrightarrow  \Tilde{\mathcal{F}}(\mathrm{id}_{\mathbb{I}_{\mathcal{C}}}). 
\end{align}
In diagrams we have
\[\begin{tikzcd}
    \mathcal{F}(A)\otimes_{\mathcal{D}} \mathcal{F}(C) \arrow{r}{\mathcal{F}_{2, A, C}} \arrow[swap]{d}{\mathcal{F}(f)\otimes_{\mathcal{D}}\mathcal{F}(g)} &  \arrow{d}{\mathcal{F}(f \otimes_{\mathcal{C}} g)}  \mathcal{F}(A \otimes_{\mathcal{C}} C)   \\ 
    \mathcal{F}(B)\otimes_{\mathcal{D}} \mathcal{F}(D) \arrow{r}{\mathcal{F}_{2, B, D}} &   \mathcal{F}(B \otimes_{\mathcal{C}} D) 
\end{tikzcd} 
\]
and
\[\begin{tikzcd}
   \mathbb{I}_{\mathcal{D}} \arrow{r}{\mathcal{F}_{0}} \arrow[swap]{d}{\mathrm{id}_{\mathbb{I}_{\mathcal{D}}}} &  \arrow{d}{\mathcal{F}(\mathrm{id}_{\mathbb{I}_{\mathcal{C}}}} \mathcal{F}(\mathbb{I}_{\mathcal{C}})   \\
   \mathbb{I}_{\mathcal{D}} \arrow{r}{\mathcal{F}_{0}} &  \mathcal{F}(\mathbb{I}_{\mathcal{C}})
\end{tikzcd} 
\]
It is left to show that if $\mathcal{F}: \mathcal{C} \longrightarrow \mathcal{D}$ is monoidal, $\Tilde{\mathcal{F}}: \text{Arr}(\mathcal{C}) \longrightarrow \text{Arr}(\mathcal{D})$ preserves the monoidal product, i. e. $\Tilde{\mathcal{F}}_{2}$ and $\Tilde{\mathcal{F}_{0}}$ fulfil the associativity and the unitality conditions, respectively. The associativity condition in Arr($\mathcal{C}$) is given by the diagram on the next page. Here the top and the bottom face commute because $\mathcal{F}_{2}$ satisfies the associativity condition. The left face and the right square in the front commute because $\mathcal{F}_{2}$ is natural and because of the monoidal product in Arr($\mathcal{D}$). The right square in the front commutes because the associator in $\mathcal{C}$ is natural. Moreover, the left and the right face in the back commute as $\mathcal{F}_{2}$ is natural and the middle face in the back commutes due to naturality of the associator in $\mathcal{C}$.\footnote{Again we have abbreviated the labels of the monoidal products, associators and natural transformations $\mathcal{F}_{2}$ to make the diagram more readable.}
\[
\def\stimes{{\otimes}}
\hspace{-3.5cm}
\small
 \begin{tikzcd}[ampersand replacement=\&, row sep=0.4cm, column sep=0.3cm] 
\& (\mathcal{F}(A)\otimes\mathcal{F}(B)) \otimes \mathcal{F}(C)
\arrow[swap]{dl}{\mathcal{F}_{2}}\arrow{rrrr}{\alpha} \arrow[dotted]{dd}[yshift=-0.5cm]{(\mathcal{F}(f)\otimes \mathcal{F}(g)) \otimes \mathcal{F}(h)} \& \& \& \& \mathcal{F}(A)\otimes(\mathcal{F}(B) \otimes \mathcal{F}(C)) \arrow[swap]{dl}{\mathrm{id}_{\mathcal{F}(A)}\otimes \phi_{B,C}} \arrow{dd}{\mathcal{F}(f)\otimes(\mathcal{F}(g) \otimes \mathcal{F}(h))} \\
\mathcal{F}(A\otimes B) \otimes \mathcal{F}(C) \arrow [crossing over]{rr}[xshift= 1cm]{\mathcal{F}_{2}} \arrow{dd} {\mathcal{F}(f\otimes g) \otimes \mathcal{F}(h)} \&  \& \mathcal{F}((A\otimes B) \otimes C) \arrow [crossing over]{dd} [yshift=0.5cm]{\mathcal{F}((f\otimes g) \otimes h)} \arrow{rr}{\mathcal{F}(\alpha)} \& \&  \arrow{dd} [yshift= 0.5 cm]{\mathcal{F}(f\otimes( g \otimes h))} \mathcal{F}(A\otimes (B \otimes C)\\
\& (\mathcal{F}(V)\otimes \mathcal{F}(W)) \otimes \mathcal{F}(U)
\arrow[dotted]{dl}{\mathcal{F}_{2}}\arrow[dotted]{rrrr}{\alpha}  \& \& \& \& \mathcal{F}(V)\otimes (\mathcal{F}(W) \otimes \mathcal{F}(U)) \arrow[swap]{dl}{\mathrm{id}_{\mathcal{F}(V)}\otimes \phi_{W,U}} \\
\mathcal{F}(V\otimes W) \otimes \mathcal{F}(U) \arrow [crossing over]{rr}[xshift= 1cm]{\mathcal{F}_{2}}  \& \& \mathcal{F}((V\otimes W) \otimes U) \arrow{rr}{\mathcal{F}(\alpha)} \& \& \mathcal{F}(V\otimes(W \otimes U)\\
\end{tikzcd}
\]
The unitality condition is given by the following diagram:
\[\begin{tikzcd}[row sep=1cm, column sep=1cm, inner sep=2ex]
& \mathcal{F}(A)\otimes_{\mathcal{D}} \mathbb{I}_{\mathcal{D}} \arrow[swap]{dl}{\rho_{\mathcal{F}(A)}}\arrow{rr}{\mathrm{id}_{\mathcal{F}(A)}\otimes_{\mathcal{D}} \mathcal{F}_{0}} \arrow[dotted]{dd}[yshift=-0.5cm]{\mathcal{F}(f)\otimes_{\mathcal{D}} \mathrm{id}_{\mathbb{I}}} & & \mathcal{F}(A)\otimes_{\mathcal{D}} \mathcal{F}(\mathbb{I}_{\mathcal{C}}) \arrow[swap]{dl}{\mathcal{F}_{2, A \mathbb{I}}} \arrow[swap]{dd}[yshift=-0.5cm]{\mathcal{F}(f)\otimes_{\mathcal{D}} \mathcal{F}(\mathrm{id}_{\mathbb{I}})} \\
\mathcal{F}(A) \arrow [crossing over]{rr}[xshift= 1cm]{\mathcal{F}(\rho_{A})} \arrow{dd} {\mathcal{F}(f)} & & \mathcal{F}(A\otimes_{\mathcal{C}}\mathbb{I}_{\mathcal{C}}) \arrow[crossing over]{dd} [yshift=0.5cm]{\mathcal{F}(f\otimes_{\mathcal{C}} \mathrm{id}_{\mathbb{I}_{\mathcal{C}}})}\\
& \mathcal{F}(B)\otimes_{\mathcal{D}} \mathbb{I}_{\mathcal{D}} \arrow[dotted]{dl}{\rho_{\mathcal{F}(B)}} \arrow[dotted] {rr}[xshift= -1cm]{\mathrm{id}_{\mathcal{F}(B)}\otimes_{\mathcal{D}} \mathcal{F}_{0}} & & \mathcal{F}(B)\otimes_{\mathcal{D}} \mathcal{F}(\mathbb{I}_{\mathcal{C}}) \arrow{dl}{\mathcal{F}_{2, B, \mathbb{I}}} \\
\mathcal{F}(B) \arrow[swap]{rr}{\mathcal{F}(\rho_{B})} & & \mathcal{F}(B\otimes_{\mathcal{C}}\mathbb{I}_{\mathcal{C}})\\
\end{tikzcd}
\]
The top and the bottom face commute because $\mathcal{F}_{0}$ satisfies the unitality condition. The back face commutes as $\mathcal{F}_{0}$ is natural and due to the definition of the monoidal product in Arr($\mathcal{D}$). Moreover, the left face commutes because of the definition of the unitor in $\mathcal{C}$ and the right face commutes since $\mathcal{F}_{2}$ is natural. Finally, the front face also commutes due to the definition of the unitor in $\mathcal{C}$.
\end{proof}
 
Similarly, a \emph{braided} monoidal functor $\mathcal{F}: \mathcal{C} \longrightarrow \mathcal{D}$ gives rise to a braided monoidal functor between Arr($\mathcal{C}$) and Arr($\mathcal{D}$):
\begin{satz}\label{prop. braided monodial functor in arrow cats}
Let $(\mathcal{C}, \otimes_{\mathcal{C}}, \mathbb{I}_{\mathcal{C}}, \sigma_{\mathcal{C}})$ and $(\mathcal{D}, \otimes_{\mathcal{D}}, \mathbb{I}_{\mathcal{D}}, \sigma_{\mathcal{D}})$ be two braided monoidal categories and $(\mathcal{F}, \mathcal{F}_{2}, \mathcal{F}_{0}) $ be a braided monoidal functor between them. Then $(\mathcal{F}, \mathcal{F}_{2}, \mathcal{F}_{0}) $ gives rise to a braided monoidal functor $(\Tilde{\mathcal{F}}, \Tilde{\mathcal{F}}_{2}, \Tilde{\mathcal{F}}_{0})$ between Arr($\mathcal{C}$) and Arr($\mathcal{D}$).
\end{satz}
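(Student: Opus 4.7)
The plan is to reduce the braided coherence for $\tilde{\mathcal{F}}$ to two instances of the braided coherence for $\mathcal{F}$ itself, by exploiting the fact that every ingredient in the construction is componentwise. By Proposition~\ref{prop. monoidal functors in arrow categories}, $\tilde{\mathcal{F}}$ already carries a monoidal structure $(\tilde{\mathcal{F}}_2, \tilde{\mathcal{F}}_0)$, so the only additional content is to verify that $\tilde{\mathcal{F}}$ intertwines the braidings of $\mathrm{Arr}(\mathcal{C})$ and $\mathrm{Arr}(\mathcal{D})$ constructed in Proposition~\ref{prop. braiding in arrow categories}.

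The first step is to unpack the relevant morphisms in $\mathrm{Arr}(\mathcal{D})$. Fix objects $f : A \to B$ and $g : C \to D$ of $\mathrm{Arr}(\mathcal{C})$. By Proposition~\ref{prop. braiding in arrow categories} the braiding is $\sigma_{f,g} = (\sigma_{A,C}, \sigma_{B,D})$, and by the construction in the previous proposition the comparison is $\tilde{\mathcal{F}}_{2,f,g} = (\mathcal{F}_{2,A,C}, \mathcal{F}_{2,B,D})$. Since composition and equality of morphisms in an arrow category are determined componentwise, the required coherence
\begin{equation*}
\tilde{\mathcal{F}}(\sigma_{f,g}) \circ \tilde{\mathcal{F}}_{2,f,g} = \tilde{\mathcal{F}}_{2,g,f} \circ \sigma_{\tilde{\mathcal{F}}(f), \tilde{\mathcal{F}}(g)}
\end{equation*}
in $\mathrm{Arr}(\mathcal{D})$ is equivalent to the two equations
\begin{equation*}
\mathcal{F}(\sigma_{A,C}) \circ \mathcal{F}_{2,A,C} = \mathcal{F}_{2,C,A} \circ \sigma_{\mathcal{F}(A), \mathcal{F}(C)}, \qquad \mathcal{F}(\sigma_{B,D}) \circ \mathcal{F}_{2,B,D} = \mathcal{F}_{2,D,B} \circ \sigma_{\mathcal{F}(B), \mathcal{F}(D)},
\end{equation*}
which are precisely the braided coherence axiom for $\mathcal{F}$ applied to the pairs $(A,C)$ and $(B,D)$. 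Before concluding, one should also quickly check that $\tilde{\mathcal{F}}_{2,f,g}$ is itself a well-defined morphism in $\mathrm{Arr}(\mathcal{D})$ (i.e., that the associated naturality square involving $\mathcal{F}(f)\otimes \mathcal{F}(g)$ and $\mathcal{F}(f\otimes g)$ commutes), but this is already the naturality of $\mathcal{F}_2$ in $\mathcal{D}$.

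For a presentation in the same diagrammatic style as the preceding proofs of this section, one would draw a cube whose top and bottom faces are the two instances of braided coherence for $\mathcal{F}$ just stated, and whose four vertical faces encode naturality of $\mathcal{F}_2$ and of $\sigma_{\mathcal{D}}$ together with the componentwise definition of the monoidal product in $\mathrm{Arr}(\mathcal{D})$. I do not anticipate any real obstacle: once the componentwise nature of the braiding on the arrow category is noted, the verification is essentially formal. The only minor point requiring care is bookkeeping with the indices, so that the cube actually witnesses the intended coherence at $(f,g)$ rather than an adjacent but distinct one at $(g,f)$.
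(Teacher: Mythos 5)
Your proposal is correct and takes essentially the same approach as the paper: the paper's proof is exactly the cube you describe, with top and bottom faces the two component instances of the braided coherence for $\mathcal{F}$ at $(V,V')$ and $(W,W')$, and the remaining faces given by naturality of $\mathcal{F}_2$ and of the braidings. Your up-front observation that equality of morphisms in $\mathrm{Arr}(\mathcal{D})$ is componentwise, so that only the top and bottom faces carry the real content, is a slightly cleaner organization of the same argument.
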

\begin{proof}
We only need to show that the following diagram commutes:
\[\begin{tikzcd}[row sep=1cm, column sep=1cm, inner sep=2ex]
& \mathcal{F}(V)\otimes_{\mathcal{D}} \mathcal{F}(V') \arrow[swap]{dl}{\mathcal{F}_{2, V, V'}}\arrow{rr}{\sigma_{\mathcal{F}(V), \mathcal{F}(V')}} \arrow[dotted]{dd}[yshift=-0.5cm]{\mathcal{F}(f)\otimes_{\mathcal{D}} \mathcal{F}(f')} & & \mathcal{F}(V')\otimes_{\mathcal{D}} \mathcal{F}(V) \arrow[swap]{dl}{\mathcal{F}_{2, V, V'}} \arrow[swap]{dd}[yshift=-0.5cm]{\mathcal{F}(f')\otimes_{\mathcal{D}} \mathcal{F}(f)} \\
\mathcal{F}(V\otimes_{\mathcal{C}} V') \arrow{rr}[xshift= 1cm]{\mathcal{F}(\sigma_{V, V'})} \arrow{dd} {\mathcal{F}(f\otimes_{\mathcal{C}}f')} & & \mathcal{F}(V' \otimes_{\mathcal{C}} V) \arrow{dd} [yshift=0.5cm]{\mathcal{F}(f'\otimes_{\mathcal{C}} f)}\\
& \mathcal{F}(W)\otimes_{\mathcal{D}} \mathcal{F}(W') \arrow[dotted]{dl}{\mathcal{F}_{2, W, W'}} \arrow[dotted]{rr}[xshift= -1cm]{\sigma_{\mathcal{F}(W), \mathcal{F}(W')}} & & \mathcal{F}(W')\otimes_{\mathcal{D}} \mathcal{F}(W) \arrow{dl}{\mathcal{F}_{2, W', W}} \\
\mathcal{F}(W \otimes_{\mathcal{C}}W') \arrow[swap]{rr}{\mathcal{F}(\sigma_{W, W'})} & & \mathcal{F}(W'\otimes_{\mathcal{C}} W)\\
\end{tikzcd}
\]
The top and the bottom face commute because $\mathcal{F}$ is a braided monoidal functor and the two side faces commute due to naturality of $\mathcal{F}_{2}$. The front face commutes due to naturality of the braiding in $\mathcal{C}$. Finally, the back face commutes per definition of the braiding and hence the whole diagram commutes.
\end{proof}
\begin{satz}\label{re. symmetric monoidal functor between arrow cats}
  If the functor $\mathcal{F}: \mathcal{C} \longrightarrow \mathcal{D}$ in the above construction is symmetric, i. e. if $\mathcal{C}$ is symmetric, then so is $\Tilde{\mathcal{F}}: \text{Arr}(\mathcal{C}) \longrightarrow \text{Arr}(\mathcal{D})$.  
\end{satz}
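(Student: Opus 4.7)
The plan is to derive this statement as an immediate corollary of the results already established, rather than redoing any hexagon-style diagram chase. First I would invoke Proposition~\ref{prop. braided monodial functor in arrow cats} to conclude that $\Tilde{\mathcal{F}}$ is a \emph{braided} monoidal functor between $\mathrm{Arr}(\mathcal{C})$ and $\mathrm{Arr}(\mathcal{D})$, with coherence data $(\Tilde{\mathcal{F}}_{2},\Tilde{\mathcal{F}}_{0})$ constructed componentwise from $(\mathcal{F}_{2},\mathcal{F}_{0})$. This already guarantees compatibility with the braidings $\sigma_{f,g}=(\sigma_{A,C},\sigma_{B,D})$ of Proposition~\ref{prop. braiding in arrow categories}.

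Next I would observe that, since a symmetric monoidal functor between symmetric monoidal categories is exactly a braided monoidal functor (there is no additional axiom beyond the hexagon compatibility with the braiding), it suffices to verify that both $\mathrm{Arr}(\mathcal{C})$ and $\mathrm{Arr}(\mathcal{D})$ are themselves symmetric. For $\mathrm{Arr}(\mathcal{C})$ this is precisely Theorem~\ref{thrm. symmteric arrow categories}; for $\mathrm{Arr}(\mathcal{D})$ the same theorem applies, using that symmetry of $\mathcal{F}$ in the standard sense presupposes that the codomain $\mathcal{D}$ is also symmetric, and so Theorem~\ref{thrm. symmteric arrow categories} promotes this to a symmetry on $\mathrm{Arr}(\mathcal{D})$ with braiding $(\sigma_{\mathcal{D}},\sigma_{\mathcal{D}})$ on the two components.

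The only point that might look like an obstacle, but is in fact trivial to check, is that $\Tilde{\mathcal{F}}$ intertwines the two symmetries componentwise. Since the symmetry $\sigma_{f,g}$ in $\mathrm{Arr}(\mathcal{C})$ is defined as the pair $(\sigma_{A,C},\sigma_{B,D})$ and $\Tilde{\mathcal{F}}$ acts pairwise as $(\mathcal{F}(-),\mathcal{F}(-))$, the braided-functor coherence square for $\Tilde{\mathcal{F}}$ reduces, on the two coordinates $A\otimes C$ and $B\otimes D$, to the corresponding braided-functor squares for $\mathcal{F}$. These squares commute by assumption on $\mathcal{F}$, and in the symmetric setting they are exactly the squares expressing that $\mathcal{F}$ is symmetric.

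Combining these observations, $\Tilde{\mathcal{F}}$ is a braided monoidal functor between two symmetric monoidal categories whose coherence squares are inherited coordinatewise from $\mathcal{F}$, which is the definition of being symmetric monoidal. Hence $\Tilde{\mathcal{F}}:\mathrm{Arr}(\mathcal{C})\longrightarrow\mathrm{Arr}(\mathcal{D})$ is symmetric monoidal, completing the argument. I expect the whole proof to amount to a short paragraph citing Proposition~\ref{prop. braided monodial functor in arrow cats} and Theorem~\ref{thrm. symmteric arrow categories}, with no new diagrammatic work required.
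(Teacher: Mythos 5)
Your proposal is correct, and since the paper states this proposition without proof (treating it as immediate), your reduction is exactly the intended argument: $\mathrm{Arr}(\mathcal{C})$ and $\mathrm{Arr}(\mathcal{D})$ are symmetric by Theorem~\ref{thrm. symmteric arrow categories}, $\Tilde{\mathcal{F}}$ is braided monoidal by Proposition~\ref{prop. braided monodial functor in arrow cats}, and a braided monoidal functor between symmetric monoidal categories is by definition a symmetric monoidal functor, so no new diagram needs to be checked. Your third paragraph re-verifying the coherence square componentwise is harmless but redundant, as it merely restates what Proposition~\ref{prop. braided monodial functor in arrow cats} already provides.
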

It is straightforward to verify that the following statement is true:
\begin{example} Let $\mathbb{K}$ be a field. An $n$-dimensional topological quantum field theory is given by a symmteric monoidal functor from the cobordism category to the category of vector spaces (see~\cite{Baez_1995}):
\begin{align}
   \mathcal{Z}: \mathbf{Cob}_{n} \longrightarrow \mathbf{Vect}_{\mathbb{K}}
\end{align}
This functor assigns to each closed oriented $(n-1)$-dimensional manifold $M$ a $\mathbb{K}$-vector space $\mathcal{Z}(M)$ and to each oriented bordism $B$ from an $(n-1)$-dimensional manifold $M$ to another $(n-1)$-dimensional manifold $N$ a $\mathbb{K}$-linear map $\mathcal{Z}(B): \mathcal{Z}(M) \longrightarrow \mathcal{Z}(N)$. \\\\
Applying  Prop.~\ref{re. symmetric monoidal functor between arrow cats}, we can construct a symmetric monoidal functor
\begin{align}
    \Tilde{\mathcal{Z}}: \mathrm{Arr}(\mathbf{Cob}_{n}) \longrightarrow \mathrm{Arr}(\mathbf{Vect}_{\mathbb{K}})
\end{align}
which assigns to each oriented bordism $B$ from an $(n-1)$-dimensional manifold $M$ to another $(n-1)$-dimensional manifold $N$ a $\mathbb{K}$-linear map $\mathcal{Z}(B): \mathcal{Z}(M) \longrightarrow \mathcal{Z}(N)$ and to each morphism $(\beta_{M}, \beta_{N}): B \longrightarrow B'$
\[\begin{tikzcd}
    M \arrow{r}{\beta_{M}} \arrow[swap]{d}{B} &  \arrow{d}{B'} M' \\
   N \arrow{r}{\beta_{N}} & N'
\end{tikzcd}
\]
where $\beta_M: M  \longrightarrow M'$ and $\beta_{N}: N \longrightarrow N'$ are bordisms, a morphism $(\mathcal{Z}(\beta_{M}), \mathcal{Z}(\beta_{N})): \mathcal{Z}(B) \longrightarrow \mathcal{Z}(B')$
\[\begin{tikzcd}
   \mathcal{Z}(M) \arrow{r}{ \mathcal{Z}(\beta_{M})} \arrow[swap]{d}{ \mathcal{Z}(B)} &  \arrow{d}{ \mathcal{Z}(B')}  \mathcal{Z}(M') \\
    \mathcal{Z}(N) \arrow{r}{ \mathcal{Z}(\beta_{N})} &  \mathcal{Z}(N')
\end{tikzcd}
\]
where $\mathcal{Z}(\beta_{M}): \mathcal{Z}(M) \longrightarrow \mathcal{Z}(M')$ and $\mathcal{Z}(\beta_{N}): \mathcal{Z}(N) \longrightarrow \mathcal{Z}(N')$ are $\mathbb{K}$-linear maps.
\end{example}
\begin{satz}\label{prop. monoidal natural trafo between arrow cats}
Let $(\mathcal{C}, \otimes_{\mathcal{C}}, \mathbb{I}_{\mathcal{C}})$ and $(\mathcal{D}, \otimes_{\mathcal{D}}, \mathbb{I}_{\mathcal{D}}) $ be two monoidal categories and $(\mathcal{F}, \mathcal{F}_{2}, \mathcal{F}_{0}) $ and $(\mathcal{G}, \mathcal{G}_{2}, \mathcal{G}_{0}) $ be two monoidal functors between those categories. A monoidal natural transformation $\eta: \mathcal{F} \Longrightarrow \mathcal{G}$ gives rise to a monoidal natural transformation $\Tilde{\eta}: \Tilde{\mathcal{F}} \Longrightarrow \Tilde{\mathcal{G}}$, where $\Tilde{\mathcal{F}}$ and $\Tilde{\mathcal{G}}$ are the functors defined on the arrow categories.
\end{satz}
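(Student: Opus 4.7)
The plan is to leverage what we already have: by Proposition~\ref{prop. natural trafo between arrow cats} the underlying natural transformation $\tilde{\eta}$ with components $\tilde{\eta}_f = (\eta_A, \eta_B)$ for $f: A \longrightarrow B$ is already constructed, and by Proposition~\ref{prop. monoidal functors in arrow categories} the induced functors $\tilde{\mathcal{F}}$ and $\tilde{\mathcal{G}}$ are monoidal with structure maps $\tilde{\mathcal{F}}_2 = (\mathcal{F}_{2,A,C}, \mathcal{F}_{2,B,D})$, $\tilde{\mathcal{F}}_0 = (\mathcal{F}_0, \mathcal{F}_0)$, and analogously for $\mathcal{G}$. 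So the only task is to verify the two monoidal-naturality coherence axioms for $\tilde{\eta}$, namely compatibility with the binary constraint and with the unit constraint.

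For the binary axiom, fix objects $f: A \longrightarrow B$ and $g: C \longrightarrow D$ in $\mathrm{Arr}(\mathcal{C})$. I would draw the cubical diagram whose two horizontal faces are the hypothesised monoidal-naturality squares for $\eta$ evaluated at $(A,C)$ and at $(B,D)$, whose vertical edges on the left are $\mathcal{F}(f)\otimes_{\mathcal{D}}\mathcal{F}(g)$ and $\mathcal{G}(f)\otimes_{\mathcal{D}}\mathcal{G}(g)$, and whose vertical edges on the right are $\mathcal{F}(f\otimes_{\mathcal{C}} g)$ and $\mathcal{G}(f\otimes_{\mathcal{C}} g)$. The top and bottom faces commute because $\eta$ is monoidal in $\mathcal{C}$; the back face is the naturality square of $\eta$ at $f\otimes_{\mathcal{C}} g$; the front face is $\mathcal{F}(f)\otimes_{\mathcal{D}}\mathcal{F}(g)$ against $\mathcal{G}(f)\otimes_{\mathcal{D}}\mathcal{G}(g)$ connected by $\eta\otimes_{\mathcal{D}}\eta$, which commutes by naturality of $\eta$ and functoriality of $\otimes_{\mathcal{D}}$; the side faces are precisely the naturality of $\mathcal{F}_2$ and $\mathcal{G}_2$ instantiated on $(f,g)$. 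Passing to the arrow-category interpretation, the pair obtained from the front and back of this cube is exactly the required square
\[
\begin{tikzcd}[row sep=1em, column sep=3em]
\tilde{\mathcal{F}}(f)\otimes_{\mathrm{Arr}(\mathcal{D})} \tilde{\mathcal{F}}(g) \arrow{r}{\tilde{\eta}_f \otimes \tilde{\eta}_g} \arrow[swap]{d}{\tilde{\mathcal{F}}_2} & \tilde{\mathcal{G}}(f)\otimes_{\mathrm{Arr}(\mathcal{D})} \tilde{\mathcal{G}}(g) \arrow{d}{\tilde{\mathcal{G}}_2} \\
\tilde{\mathcal{F}}(f\otimes_{\mathcal{C}} g) \arrow{r}{\tilde{\eta}_{f\otimes g}} & \tilde{\mathcal{G}}(f\otimes_{\mathcal{C}} g)
\end{tikzcd}
\]
which, being an equality of morphisms in $\mathrm{Arr}(\mathcal{D})$, amounts to the commutativity of its two coordinate projections in $\mathcal{D}$, both of which we have just verified.

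For the unit axiom, the situation is easier: applied to the unit object $\mathrm{id}_{\mathbb{I}_{\mathcal{C}}}$ of $\mathrm{Arr}(\mathcal{C})$, both coordinates of the required triangle collapse to the unit-compatibility triangle for $\eta$ with vertices $\mathbb{I}_{\mathcal{D}}$, $\mathcal{F}(\mathbb{I}_{\mathcal{C}})$, $\mathcal{G}(\mathbb{I}_{\mathcal{C}})$ and edges $\mathcal{F}_0$, $\mathcal{G}_0$, $\eta_{\mathbb{I}_{\mathcal{C}}}$, which commutes by hypothesis.

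I do not expect any genuine obstacle here; the argument is a routine coordinate-wise reduction, and the only mildly delicate point is bookkeeping the cube in the binary case so that one correctly identifies the front and back faces with the two components of the arrow-category morphisms. The proof thus consists of drawing these two cubes, noting that each face commutes for one of the already-recorded reasons (monoidality of $\eta$, naturality of $\eta$, naturality of $\mathcal{F}_2$/$\mathcal{G}_2$, or the definition of the monoidal product in $\mathrm{Arr}(\mathcal{D})$), and concluding that $\tilde{\eta}$ is a monoidal natural transformation.
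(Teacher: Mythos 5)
Your proposal is correct and follows essentially the same route as the paper: both reduce the claim to the two monoidal-naturality axioms, verify the binary one by the same cube (top and bottom faces from monoidality of $\eta$, side faces from naturality of $\mathcal{F}_2$ and $\mathcal{G}_2$, front and back from naturality of $\eta$), and dispose of the unit axiom by noting it collapses to the unit triangle for $\eta$. No further comment is needed.
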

\begin{proof} We only need to show that $\Tilde{\eta}$ preserves the monoidal product and the monoidal unit in Arr($\mathcal{C}$), if $\eta$ preserves the monoidal product and the monoidal unit in $\mathcal{C}$. For this, the following diagram has to commute:
\[\begin{tikzcd}[row sep=1cm, column sep=1cm, inner sep=2ex]
& \mathcal{F}(A)\otimes_{\mathcal{D}} \mathcal{F}(A) \arrow[swap]{dl}{\mathcal{F}_{2, A, C}}\arrow{rr}{\eta_{A}\otimes_{\mathcal{D}}\eta_{B}} \arrow[dotted]{dd}[yshift=-0.5cm]{\mathcal{F}(f)\otimes_{\mathcal{D}} \mathcal{F}(g)} & & \mathcal{G}(A)\otimes_{\mathcal{D}} \mathcal{G}(C) \arrow[swap]{dl}{\mathcal{G}_{2, A, C}} \arrow[swap]{dd}[yshift=-0.5cm]{\mathcal{G}(f)\otimes_{\mathcal{D}} \mathcal{G}(g)} \\
\mathcal{F}(A\otimes_{\mathcal{C}} C) \arrow{rr}[xshift= 1cm]{\eta_{A\otimes_{\mathcal{C}} C}} \arrow{dd} {\mathcal{F}(f\otimes_{\mathcal{C}}g)} & & \mathcal{G}(A \otimes_{\mathcal{C}} C) \arrow[crossing over]{dd} [yshift=0.5cm]{\mathcal{G}(f\otimes_{\mathcal{C}} g)}\\
& \mathcal{F}(B)\otimes_{\mathcal{D}} \mathcal{F}(D) \arrow[dotted]{dl}{\mathcal{F}_{2, B, D}} \arrow[dotted] {rr}[xshift= -1cm]{\eta_{B} \otimes_{\mathcal{D}} \eta_{D}} & & \mathcal{G}(B)\otimes_{\mathcal{D}} \mathcal{G}(D) \arrow{dl}{\mathcal{G}_{2, B, D}} \\
\mathcal{F}(B \otimes_{\mathcal{C}}D) \arrow[swap]{rr}{\eta_{B\otimes_{\mathcal{C}} D}} & & \mathcal{G}(B\otimes_{\mathcal{C}} D)\\
\end{tikzcd}
\]
The top and the bottom face commute because $\eta$ is a monoidal natural transformation and the two side faces commute due to naturality of $\mathcal{F}_{2}$ resp. $\mathcal{G}_{2}$. Finally, the front and the back face commute due to naturality of $\eta$ and thus the whole diagram commutes. Moreover, the following diagram has to commute as well:
\[\begin{tikzcd}[row sep=scriptsize, column sep=scriptsize]
\mathcal{F}(\mathbb{I}_{\mathcal{C}}) \arrow{dd}{\mathcal{F}(\mathrm{id}_{\mathbb{I}_{\mathcal{C}}})} \arrow{dr}{\mathcal{F}_{0}} \arrow{rr}{\eta_{\mathbb{I}_{\mathcal{C}}}} & &  \mathcal{G}(\mathbb{I}_{\mathcal{C}}) \arrow{dd}{\mathcal{G}(\mathrm{id}_{\mathbb{I}_{\mathcal{C}}})} \arrow{dl}{\mathcal{G}_{0} } \\
& \mathbb{I}_{\mathcal{D}} \arrow{dd}[yshift= 0.5 cm]{\mathrm{id}_{\mathbb{I}_{\mathcal{D}}}} & \\
\mathcal{F}(\mathbb{I}_{\mathcal{C}}) \arrow{dr}{\mathcal{F}_{0}} \arrow[dotted]{rr}[xshift= -0.5cm]{\eta_{\mathbb{I}_{\mathcal{C}}}} & & \mathcal{G}(\mathbb{I}_{\mathcal{C}}) \arrow{dl}{\mathcal{G}_{0}} \\
 & \mathbb{I}_{\mathcal{D}}  & 
\end{tikzcd} 
\]
Here the top and the bottom face commute because $\eta$ is a monoidal natural transformation and the two side faces commute due to the naturality of $\mathcal{F}_{0}$ resp. $\mathcal{G}_{0}$. The back face also commutes as $\eta$ is a natural and thus the whole diagram commutes.
\end{proof} 
\subsection{Duals and Pivots in Arrow Categories} \label{sec: duals in arrow cats}
In this section we will investigate under which circumstances an arrow category is rigid. As it turns out, we can, similarly to the constructions made earlier, use the structure of $\mathcal{C}$ to define a evaluation and coevaluation map on the subcategory of Arr($\mathcal{C}$) where all objects are isomorphisms, i.e. where all objects are morphisms in the \emph{core} of $\C$. Furthermore, we will show that a pivot in a pivotal category $\mathcal{C}$ induces a pivot in its arrow category and that a twist in a ribbon category gives rise to a twist in its arrow category.
\begin{theorem}\label{thrm. duals in arrow cats}
Let $(\mathcal{C}, \otimes_{\mathcal{C}}, \mathbb{I}_{\mathcal{C}})$ be a rigid monoidal category. An object $f: A \longrightarrow B$ in Arr($\mathcal{C}$) has a dual if and only if it is an isomorphism in $\mathcal{C}$.

\end{theorem}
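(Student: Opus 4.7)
The approach splits naturally into two directions. The reverse implication is more constructive and I will handle it first. Assuming $f: A \longrightarrow B$ is an isomorphism, I intend to exhibit a dual in $\mathrm{Arr}(\mathcal{C})$ by choosing the object $g := (f^{-1})^* : A^* \longrightarrow B^*$, the categorical transpose in $\mathcal{C}$ of $f^{-1}$, and assembling the candidate evaluation and coevaluation of $f$ from their counterparts in $\mathcal{C}$, namely $\widetilde{\mathrm{ev}} := (\mathrm{ev}_A, \mathrm{ev}_B)$ and $\widetilde{\mathrm{coev}} := (\mathrm{coev}_A, \mathrm{coev}_B)$. The verification that these pairs are bona fide morphisms in $\mathrm{Arr}(\mathcal{C})$ reduces to the two square conditions $\mathrm{ev}_B \circ (g \otimes f) = \mathrm{ev}_A$ and $(f \otimes g) \circ \mathrm{coev}_A = \mathrm{coev}_B$, both following directly from the defining property of the transpose $(f^{-1})^*$ combined with the fact that $f$ and $f^{-1}$ are mutually inverse. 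The two snake identities in $\mathrm{Arr}(\mathcal{C})$ then decompose component-wise into the snake identities for $A$ and $B$ in $\mathcal{C}$, which hold by rigidity of $\mathcal{C}$.

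For the forward implication I would assume $f: A \longrightarrow B$ carries a dual $g: C \longrightarrow D$ in $\mathrm{Arr}(\mathcal{C})$, with $\widetilde{\mathrm{ev}} = (\alpha, \beta)$ and $\widetilde{\mathrm{coev}} = (\gamma, \delta)$ where $\alpha : C \otimes A \to \mathbb{I}$, $\beta : D \otimes B \to \mathbb{I}$, $\gamma : \mathbb{I} \to A \otimes C$ and $\delta : \mathbb{I} \to B \otimes D$. My first move is to observe that each of the two arrow-category snake identities decomposes into a top and a bottom component, yielding four snake identities in $\mathcal{C}$ which witness that $(A, C, \alpha, \gamma)$ and $(B, D, \beta, \delta)$ are genuine dualities in $\mathcal{C}$. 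Separately, the commutativity of the squares underlying $\widetilde{\mathrm{ev}}$ and $\widetilde{\mathrm{coev}}$ supplies two compatibility constraints,
\[
\alpha = \beta \circ (g \otimes f), \qquad \delta = (f \otimes g) \circ \gamma.
\]

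The main step is then to construct an inverse for $f$ out of this data. I propose to use the categorical transpose of $g$ across the two dualities above,
\[
k := (\mathrm{id}_A \otimes \beta) \circ (\mathrm{id}_A \otimes g \otimes \mathrm{id}_B) \circ (\gamma \otimes \mathrm{id}_B) \;:\; B \longrightarrow A.
\]
Substituting the first compatibility constraint into the top snake $(\mathrm{id}_A \otimes \alpha) \circ (\gamma \otimes \mathrm{id}_A) = \mathrm{id}_A$ and sliding $f$ into position via bifunctoriality of $\otimes$ is expected to yield $k \circ f = \mathrm{id}_A$. Dually, substituting the second compatibility constraint into the bottom snake $(\mathrm{id}_B \otimes \beta) \circ (\delta \otimes \mathrm{id}_B) = \mathrm{id}_B$ should yield $f \circ k = \mathrm{id}_B$, so $f$ is invertible.

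The main obstacle is recognising that the arrow-category dual data \emph{already} contains two $\mathcal{C}$-level dualities plus exactly the compatibility needed to force $f$ to be an isomorphism, that is, identifying the correct candidate $k$ and seeing how the two compatibility squares thread through the $\mathcal{C}$-level snakes. Once $k$ is in place, the two checks are routine diagram chases using only bifunctoriality and the snake identities; the reverse direction is then essentially bookkeeping around the transpose $(f^{-1})^*$.
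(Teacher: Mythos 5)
Your proposal is correct and follows essentially the same route as the paper: componentwise evaluation and coevaluation, the two compatibility squares $\alpha = \beta \circ (g \otimes f)$ and $\delta = (f \otimes g) \circ \gamma$, and the transpose-style candidate inverse $k$ threaded through the snake identities, with the converse handled via $(f^{-1})^*$. If anything, your forward direction is slightly more careful than the paper's, since you do not presuppose that the dual object already has the form $A^* \to B^*$ equipped with the standard cups and caps of $\mathcal{C}$, but instead derive the two $\mathcal{C}$-level dualities from the componentwise snake identities before extracting the inverse.
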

\begin{proof}
Let $d_{A}: A^{*}\otimes_{\mathcal{C}}A \longrightarrow \mathbb{I}$ and $b_{A}: \mathbb{I} \longrightarrow A \otimes_{\mathcal{C}} A^{*}$, where $A, A^{*}$ $\in$ obj($\mathcal{C}$) respectively, define the evaluation and coevaluation map in $\mathcal{C}$. For a morphism $f: A\longrightarrow B$ $\in$ obj(Arr($\mathcal{C}$)) and its dual $f^{*}: A^{*} \longrightarrow B^{*}$ we can now define maps in Arr($\mathcal{C}$) :
 \begin{align}
     d_{f}: f^{*} \otimes_{\mathrm{Arr}(\mathcal{C})} f \longrightarrow \mathrm{id}_{\mathbb{I}_{\mathcal{C}}}\label{death}
 \end{align}
 and 
  \begin{align}
     b_{f}: \mathrm{id}_{\mathbb{I}_{\mathcal{C}}}  \longrightarrow  f^{*} \otimes_{\mathrm{Arr}(\mathcal{C})} f, \label{birth}
 \end{align}
such that 
 \[\begin{tikzcd} 
    A^{*} \otimes_{\mathcal{C}} A \arrow{r}{d_{A}} \arrow[swap]{d}{f^{*}\otimes_{\mathcal{C}} f} &  \arrow{d}{ \mathrm{id}_{\mathbb{I}_{\mathcal{C}}}} \mathbb{I}_{\mathcal{C}} \\
    B^{*}\otimes_{\mathcal{C}} B \arrow{r}{d_{B}} & \mathbb{I}_{\mathcal{C}}
\end{tikzcd}
\]
and
\[\begin{tikzcd} 
    \mathbb{I}_{\mathcal{C}} \arrow{r}{b_{A}} \arrow[swap]{d}{\mathrm{id}_{\mathbb{I}_{\mathcal{C}}}} &  \arrow{d}{ f\otimes_{\mathcal{C}} f^{*}} A \otimes_{\mathcal{C}} A^{*} \\
   \mathbb{I}_{\mathcal{C}}  \arrow{r}{b_{B}} & B\otimes_{\mathcal{C}} B^{*}
\end{tikzcd}
\]
commute. The first snake identity in Arr($\mathcal{C}$) is given by:
 \[\begin{tikzcd}
    A  \arrow{rr}{\mathrm{id}_{A}}\arrow[swap]{dd}{f} \arrow[swap]{dr} [xshift=0.5cm]{\mathrm{id}_{A}\otimes_{\mathcal{C}} b_{A}} & &  \arrow[swap]{dd}{f} A \\
    & A \otimes_{\mathcal{C}} (A\otimes_{\mathcal{C}} A^{*}) \arrow{ur}{\mathrm{id}_{A} \otimes_{\mathcal{C}}d_{A}} \arrow{dd}[yshift=0.5cm]{f\otimes_{\mathcal{C}}(f\otimes_{\mathcal{C}}f^{*})} & \\
    B \arrow{dr}[swap]{\mathrm{id}_{B} \otimes_{\mathcal{C}}b_{B}} \arrow[dotted]{rr}[xshift=-0.5cm]{\mathrm{id}_{B}} & &   B\\
    & \arrow{ur}{\mathrm{id}_{B} \otimes_{\mathcal{C}}d_{B}} B \otimes_{\mathcal{C}} ( B\otimes_{\mathcal{C}}B^{*}) &\\
   \end{tikzcd}
\]
The top and the bottom face commute because $b$ and $d$ fulfil the snake identities in $\mathcal{C}$ for the dualities $A\dashv A^{*}$ and $B\dashv B^{*}$ respectively. The two side faces commute due to the definition of the monoidal product and the (co)evaluation map in Arr($\mathcal{C}$). Finally, it is easy to see that the back face also commutes and thus the whole diagram commutes. Analogously, one can show that the second snake identity is satisfied in Arr($\mathcal{C}$).\\\\ 
In terms of the graphical calculus, commutativity of the diagram corresponding to Eq.~\ref{death} means that the following equation has to hold: 
\begin{figure}[H]
    \centering
    \includegraphics[scale=0.7]{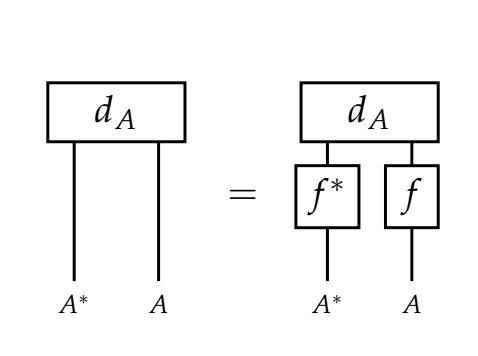}
\end{figure}
Inserting this into the snake identity in $\mathcal{C}$, yields: 
\begin{figure}[H]
    \centering
    \includegraphics[scale=0.6]{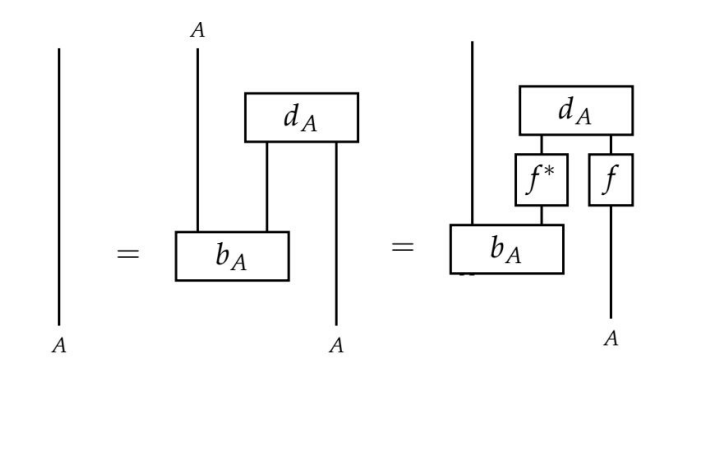}
\end{figure}
For the other snake identity we get a similar expression. From this we can conclude that $f$ has to be an isomorphism with inverse:
\begin{figure}[H]
    \centering
    \includegraphics[scale=0.7]{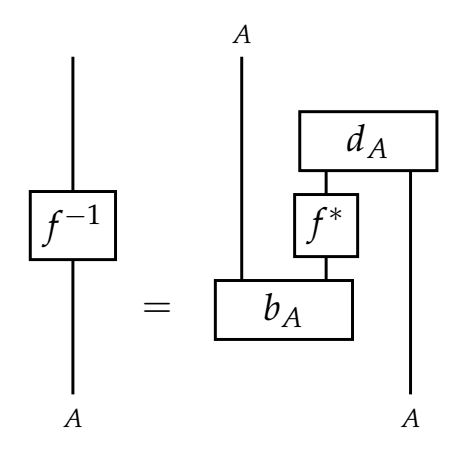}
\end{figure}
On the other hand, if we assume that $f$ is invertible with inverse given by the morphism in the diagram above, it is easy to verify that the diagrams corresponding to Eq.~\ref{birth} and Eq.~\ref{death} and hence also the snake identities in Arr($\C$) commute. From that one can conclude that an object $f$ in Arr($\mathcal{C}$) has a dual if and only if it is invertible.
\end{proof}
\begin{example}\label{ex. duals in Arr(Rel)}
In \textbf{Mat}($\mathbb{N}$), only invertible matrices have duals.
\end{example}\noindent
In the following we will denote the core of an arbitrary category $\C$ with $\C_{core}$.
\begin{theorem}\label{thrm. pivots in arrow cats}
Let $\mathcal{C}$ be a pivotal category with pivot $\pi_{A}: A \longrightarrow A^{**}$ for $A$ $\in$ obj($\mathcal{C}$). Then Arr($\mathcal{C}_{core}$) is also pivotal with pivot $\Tilde{\pi}_{f}=(\pi_{A}, \pi_{B}): f \longrightarrow f^{**}$ for $f: A \longrightarrow B$ $\in$ obj(Arr($\mathcal{C}$)).
\end{theorem}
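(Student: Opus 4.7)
The plan is to build $\tilde{\pi}$ as a monoidal natural isomorphism between the identity functor on $\mathrm{Arr}(\mathcal{C}_{core})$ and the double-dual functor on $\mathrm{Arr}(\mathcal{C}_{core})$, and then invoke the transport results already established earlier in the paper. First, I would check that $\tilde{\pi}_f=(\pi_A,\pi_B)$ really is a morphism in $\mathrm{Arr}(\mathcal{C}_{core})$: the required commuting square $f^{**}\circ\pi_A=\pi_B\circ f$ is precisely the naturality of the pivot $\pi$ at the morphism $f$ in $\mathcal{C}$. Moreover, since $f$ is an isomorphism (as it is an object of $\mathrm{Arr}(\mathcal{C}_{core})$) and $(-)^{**}$ is a functor, $f^{**}$ is again an isomorphism, so $\tilde{\pi}_f$ lies between bona fide objects of $\mathrm{Arr}(\mathcal{C}_{core})$. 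I should also briefly note that $\mathrm{Arr}(\mathcal{C}_{core})$ inherits a monoidal structure from $\mathcal{C}$ via Proposition~\ref{prop. monoidal product in arrow cats}, since the tensor product of isomorphisms is an isomorphism, and that it is rigid by Theorem~\ref{thrm. duals in arrow cats}, so the double-dual endofunctor on $\mathrm{Arr}(\mathcal{C}_{core})$ makes sense.

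Second, I would identify the two functors being related as arrow-lifts of functors on $\mathcal{C}_{core}$: the identity on $\mathrm{Arr}(\mathcal{C}_{core})$ is the lift of $\mathrm{id}_{\mathcal{C}_{core}}$ via Proposition~\ref{prop. functor between arrow cat.}, and the double-dual functor built from the rigid structure of Theorem~\ref{thrm. duals in arrow cats} sends $f\mapsto f^{**}$ on objects and $(\phi_A,\phi_B)\mapsto(\phi_A^{**},\phi_B^{**})$ on morphisms, hence coincides with the arrow-lift of $(-)^{**}_{\mathcal{C}_{core}}$. Consequently, by Proposition~\ref{rem. natural isomoprhism between arrow cats}, the natural isomorphism $\pi$ on $\mathcal{C}_{core}$ lifts to a natural isomorphism $\tilde{\pi}$ on $\mathrm{Arr}(\mathcal{C}_{core})$ whose components at $f$ are exactly $(\pi_A,\pi_B)$.

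Third, for monoidality of $\tilde{\pi}$, I would invoke Proposition~\ref{prop. monoidal natural trafo between arrow cats} directly: a pivot is by definition a monoidal natural isomorphism $\pi\colon\mathrm{id}\Rightarrow(-)^{**}$, and both the identity functor and the double-dual functor are monoidal on $\mathcal{C}_{core}$, so their arrow-lifts are monoidal functors between which $\tilde{\pi}$ is automatically a monoidal natural transformation. Combining these three ingredients yields that $\tilde{\pi}$ is a monoidal natural isomorphism between the identity and the double-dual functor on $\mathrm{Arr}(\mathcal{C}_{core})$, i.e.\ a pivot.

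The main obstacle I anticipate is the bookkeeping needed to certify that the double-dual endofunctor on $\mathrm{Arr}(\mathcal{C}_{core})$ extracted from Theorem~\ref{thrm. duals in arrow cats} genuinely agrees with the component-wise lift of $(-)^{**}\colon\mathcal{C}_{core}\to\mathcal{C}_{core}$; this in turn depends on making a coherent choice of dual morphism $\phi^{*}$ for each morphism $\phi$, which is canonical in a rigid category but worth pinning down so that the hypotheses of the previous propositions really do apply. The rest, in particular verifying the naturality square at $\tilde{\pi}_f$ against an arbitrary arrow of $\mathrm{Arr}(\mathcal{C}_{core})$, reduces to pasting two naturality squares of $\pi$ in $\mathcal{C}$ and is routine.
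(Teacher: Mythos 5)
Your proposal is correct and follows essentially the same route as the paper's own proof: both regard the pivot as a monoidal natural isomorphism from the identity to the double-dual functor and transport it to $\mathrm{Arr}(\mathcal{C}_{core})$ via the lifting results for functors and (monoidal) natural transformations established earlier. Your version is somewhat more careful than the paper's in checking that the double-dual endofunctor on the arrow category agrees with the component-wise lift of $(-)^{**}$, but this is a refinement of the same argument rather than a different one.
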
 
\begin{proof}
Since the pivot is a monoidal natural transformation between the identity functor and the functor that sends each object to its double dual, we can apply Proposition~\ref{prop. monoidal functors in arrow categories} and Proposition~\ref{prop. natural trafo between arrow cats} to define a monoidal natural transformation between the identity functor on Arr($\mathcal{C}_{core}$) and the functor that sends each object in Arr($\mathcal{C}_{core}$) to its double dual.
\end{proof}

\begin{example}
The arrow category of the subcategory of $\mathbf{Mat}(\N)$ where all matrices are invertible is a pivotal symmetric monoidal category.
\end{example}
\begin{satz}\label{prop. ribbon arrow cats}
Let  $(\mathcal{C}, \otimes_{\mathcal{C}}, \mathbb{I}_{\mathcal{C}})$ be a ribbon category, i. e. a rigid monoidal category with a twist $\theta_{A}: A \longrightarrow A$. Then $Arr(\mathcal{C}_{core})$ is a ribbon category with twist $(\theta_{A}, \theta_{B}): f \longrightarrow f$, i. e. we have:
\[\begin{tikzcd}
 A  \arrow{r}{\theta_{A}} \arrow[swap]{d}{f} & \arrow{d}{f} A  \\
   B   \arrow{r}{\theta_{B}} &  B 
   \end{tikzcd}\]
\end{satz}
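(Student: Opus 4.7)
The plan is to combine the structural results already established and then verify the twist axioms componentwise. First, I would observe that $\mathrm{Arr}(\mathcal{C}_{core})$ is already known to be a braided rigid monoidal category: the braiding comes from Proposition~\ref{prop. braiding in arrow categories}, while Theorem~\ref{thrm. duals in arrow cats} provides duals, with the restriction to the core guaranteeing that every object is an isomorphism and hence dualisable. To define the twist, I would invoke Proposition~\ref{prop. natural trafo between arrow cats} together with Proposition~\ref{rem. natural isomoprhism between arrow cats} applied to the natural isomorphism $\theta: \mathrm{id}_{\mathcal{C}} \Longrightarrow \mathrm{id}_{\mathcal{C}}$. This yields a natural isomorphism $\tilde{\theta}: \mathrm{id}_{\mathrm{Arr}(\mathcal{C}_{core})} \Longrightarrow \mathrm{id}_{\mathrm{Arr}(\mathcal{C}_{core})}$ whose component at $f: A \to B$ is the pair $(\theta_A, \theta_B)$. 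That this pair is a valid morphism $f \to f$ in the arrow category amounts to the square $\theta_B \circ f = f \circ \theta_A$ commuting, which is precisely the naturality of $\theta$ at $f$.

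The remaining task is to verify the two twist axioms. For compatibility with the braiding, one needs
\[
\tilde{\theta}_{f \otimes g} = (\tilde{\theta}_f \otimes \tilde{\theta}_g) \circ \sigma_{g,f} \circ \sigma_{f,g}.
\]
Both sides are pairs of morphisms in $\mathcal{C}$, and since the monoidal product, braiding and composition in $\mathrm{Arr}(\mathcal{C}_{core})$ are all defined componentwise (as established in Propositions~\ref{prop. monoidal product in arrow cats} and~\ref{prop. braiding in arrow categories}), this identity reduces to the corresponding identity $\theta_{X \otimes Y} = (\theta_X \otimes \theta_Y) \circ \sigma_{Y,X} \circ \sigma_{X,Y}$ in $\mathcal{C}$ applied separately at the source and target components, which holds by assumption.

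For compatibility with duals, i.e.\ $\tilde{\theta}_{f^*} = (\tilde{\theta}_f)^*$, one must first unwind the description of the duality structure from Theorem~\ref{thrm. duals in arrow cats}: if $f: A \to B$ is an isomorphism in $\mathcal{C}$, then its dual in the arrow category is $f^*: B^* \to A^*$ with the explicit inverse-transpose form given by the graphical argument there, and the operation $(-)^*$ sends a morphism $(\phi, \psi)$ in $\mathrm{Arr}(\mathcal{C}_{core})$ to the pair of duals of its components. With this bookkeeping, both sides of the axiom reduce to the pair $(\theta_{B^*}, \theta_{A^*})$ using the ribbon identity $\theta_{X^*} = (\theta_X)^*$ in $\mathcal{C}$.

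The main obstacle I anticipate is precisely this last step: one needs to make the componentwise action of dualisation on morphisms in $\mathrm{Arr}(\mathcal{C}_{core})$ fully explicit, keeping careful track of how the source and target of objects and the direction of morphisms swap under $(-)^*$. Once that is pinned down, every axiom for the twist transfers directly from $\mathcal{C}$ to $\mathrm{Arr}(\mathcal{C}_{core})$ through the pair structure of morphisms, and no genuinely new calculation is required.
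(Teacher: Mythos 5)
Your proposal is correct and takes essentially the same route as the paper: define the twist componentwise as $(\theta_A,\theta_B)$ by lifting the natural isomorphism $\theta$ to the arrow category, and reduce each ribbon axiom to the corresponding axiom in $\mathcal{C}$ applied separately to the source and target components. The only cosmetic differences are that the paper also records the unit condition $\theta_{\mathbb{I}_{\mathcal{C}}}=\mathrm{id}_{\mathbb{I}_{\mathcal{C}}}$ explicitly and presents the braiding-compatibility check as a commuting cube whose faces encode the componentwise reductions you describe.
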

\begin{proof}
 In order to define a twist $(\theta_{A}, \theta_{B}): f \longrightarrow f$, the following diagram has to commute:
\[\begin{tikzcd}[row sep=1cm, column sep=1cm, inner sep=2ex]
& C \otimes_{\mathcal{C}} A
\arrow[swap]{dl}{\sigma_{A,C}^{-1}}\arrow{rr}{\sigma_{C,A}} \arrow[dotted]{dd}[yshift=-0.5cm]{g \otimes_{\mathcal{C}} f} & & A \otimes_{\mathcal{C}} C \arrow[swap]{dl}{\theta_{A} \otimes_{\mathcal{C}} \theta_{C}} \arrow{dd}{f \otimes_{\mathcal{C}} g} \\
A \otimes_{\mathcal{C}} C \arrow [crossing over]{rr}[xshift= 0.5cm]{\theta_{A \otimes C}} \arrow{dd} {f \otimes_{\mathcal{C}} g} & & A \otimes_{\mathcal{C}} C \arrow[crossing over]{dd} [yshift=0.5cm]{f \otimes_{\mathcal{C}} g}\\
& D \otimes_{\mathcal{C}} B \arrow[dotted]{dl}{\sigma_{B,D}^{-1}} \arrow[dotted] {rr}[xshift= -0.5cm]{\sigma_{D,B}} & & B \otimes_{\mathcal{C}} D \arrow{dl}{\theta_{B} \otimes_{\mathcal{C}} \theta_{D}} \\
B \otimes_{\mathcal{C}} D \arrow[swap]{rr}{\theta_{B\otimes D}} & & B \otimes_{\mathcal{C}} D\\
\end{tikzcd}
\]
Here the bottom and the top face commute because $\theta$ is a twist in $\mathcal{C}$. Moreover, the front face commutes due to naturality of $\theta$ in $\mathcal{C}$. The left and the back face commute due to naturality of the braiding and its inverse and the right face commutes because of the definition of the monoidal product and naturality of the braiding.\\\\
The second identity that has to be fulfilled is given by
\begin{align*}
    (\theta_{\mathbb{I}_{\mathcal{C}}}, \theta_{\mathbb{I}_{\mathcal{C}}}) = (\mathrm{id}_{\mathbb{I}_{\mathcal{C}}}, \mathrm{id}_{\mathbb{I}_{\mathcal{C}}}) 
\end{align*}
This equation holds because $\theta$ is a twist in $\mathcal{C}$ and hence: $\theta_{\mathbb{I}_{\mathcal{C}}} =\mathrm{id}_{\mathbb{I}_{\mathcal{C}}}$. The last condition that has to be satisfied is given by:
\begin{align*}
    (\theta_{B^{*}}, \theta_{A^{*}})= (\theta_{A}, \theta_{B})^{*} = (\theta_{B}^{*}, \theta_{A}^{*}) 
\end{align*}
which holds since $\theta$ is a twist and hence we have: $(\theta_{B}^{*}, \theta_{A}^{*}) = (\theta_{B^{*}}, \theta_{A^{*}})$.
\end{proof}
\begin{example}
Consider again the arrow category of the subcategory of $\mathbf{Mat}(\N)$ where all matrices are invertible. The pivot induces a twist on this category via $\theta_{A}=\mathrm{id}_{A}$ and hence this category is a ribbon category. In particular, it is a compact category.
\end{example}
\subsection{(Co)monoids, Bialgebras, Frobenius Structures and Hopf Algebras in Arrow Categories} \label{sec: monoids comonoids in arr cats}
In this section we will show that the (co)monoids in arrow categories are given by (co)monoid-morphisms. With that, we can then derive a notion of bialgebras, Frobenius algebras and Hopf algebras in arrow categories. In the following we will use $\eta$ to denote the unit of a monoid in a category. This should not be confused with a natural transformation.
\begin{theorem}\label{thrm. monoids in arrow cats}
Let $\mathcal{C}$ be a monoidal category and ($A$, $\mu_{A}$, $\eta_{A}$) and ($B$, $\mu_{B}$, $\eta_{B}$) be monoids in $\mathcal{C}$. Then a morphism of monoids  $f: A \longrightarrow B$ in $\mathcal{C}$ is a monoid in Arr($\mathcal{C}$). If the monoids $A$ and $B$ are commutative, so is the monoid $f: A\longrightarrow B $ in Arr($\mathcal{C}$).
\end{theorem}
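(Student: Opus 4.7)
The plan is to define the candidate monoid structure on $f$ in $\mathrm{Arr}(\mathcal{C})$ componentwise, using the monoid structures on $A$ and $B$, and then observe that the monoid-morphism hypothesis on $f$ is precisely what is needed for these components to assemble into morphisms of $\mathrm{Arr}(\mathcal{C})$. All remaining axioms will then reduce to the corresponding axioms for $A$ and $B$ separately.

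Concretely, I would set $\mu_f := (\mu_A,\mu_B)$ as a candidate morphism $f\otimes_{\mathrm{Arr}(\mathcal{C})} f \longrightarrow f$, and $\eta_f := (\eta_A,\eta_B)$ as a candidate morphism $\mathrm{id}_{\mathbb{I}_\mathcal{C}} \longrightarrow f$. The first step is to verify that these pairs really are morphisms in $\mathrm{Arr}(\mathcal{C})$; that is, that the squares
\[
\begin{tikzcd}
A\otimes_\mathcal{C} A \arrow{r}{\mu_A} \arrow[swap]{d}{f\otimes_\mathcal{C} f} & A \arrow{d}{f} \\
B\otimes_\mathcal{C} B \arrow{r}{\mu_B} & B
\end{tikzcd}
\qquad
\begin{tikzcd}
\mathbb{I}_\mathcal{C} \arrow{r}{\eta_A} \arrow[swap]{d}{\mathrm{id}_{\mathbb{I}_\mathcal{C}}} & A \arrow{d}{f} \\
\mathbb{I}_\mathcal{C} \arrow{r}{\eta_B} & B
\end{tikzcd}
\]
commute. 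These are exactly the two equations $f\circ\mu_A=\mu_B\circ(f\otimes_\mathcal{C} f)$ and $f\circ\eta_A=\eta_B$ encoded in the hypothesis that $f$ is a monoid morphism, so nothing further needs to be checked here.

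Next I would verify the associativity and unitality axioms for $(f,\mu_f,\eta_f)$ in $\mathrm{Arr}(\mathcal{C})$. By the definition of composition and tensor product in $\mathrm{Arr}(\mathcal{C})$ (Proposition~\ref{prop. monoidal product in arrow cats}), morphisms compose and tensor componentwise, and the associator in $\mathrm{Arr}(\mathcal{C})$ is given componentwise by $\alpha$ in $\mathcal{C}$. Consequently, both sides of the associativity pentagon for $\mu_f$ reduce to the pair
\[
\bigl(\mu_A\circ(\mu_A\otimes_\mathcal{C}\mathrm{id}_A),\;\mu_B\circ(\mu_B\otimes_\mathcal{C}\mathrm{id}_B)\bigr) \;=\; \bigl(\mu_A\circ(\mathrm{id}_A\otimes_\mathcal{C}\mu_A),\;\mu_B\circ(\mathrm{id}_B\otimes_\mathcal{C}\mu_B)\bigr),
\]
which holds by associativity of $\mu_A$ and $\mu_B$ separately. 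The two unit triangles likewise reduce to the unit triangles for $(A,\mu_A,\eta_A)$ and $(B,\mu_B,\eta_B)$ respectively. Hence $(f,\mu_f,\eta_f)$ is a monoid in $\mathrm{Arr}(\mathcal{C})$.

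Finally, for the commutativity claim, if $A$ and $B$ are commutative monoids then $\mu_A\circ\sigma_{A,A}=\mu_A$ and $\mu_B\circ\sigma_{B,B}=\mu_B$. Using the braiding on $\mathrm{Arr}(\mathcal{C})$ from Proposition~\ref{prop. braiding in arrow categories}, which is also defined componentwise as $\sigma_{f,f}=(\sigma_{A,A},\sigma_{B,B})$, the equation $\mu_f\circ\sigma_{f,f}=\mu_f$ splits into its two components and follows immediately. There is no real obstacle in the proof: the entire content is recognising that a monoid in $\mathrm{Arr}(\mathcal{C})$ is the same data as two monoids in $\mathcal{C}$ together with a monoid morphism between them, and that the componentwise nature of $\otimes_{\mathrm{Arr}(\mathcal{C})}$ makes every axiom in $\mathrm{Arr}(\mathcal{C})$ equivalent to a pair of axioms in $\mathcal{C}$.
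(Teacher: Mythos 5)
Your proposal is correct and follows essentially the same route as the paper: define $\mu_f=(\mu_A,\mu_B)$ and $\eta_f=(\eta_A,\eta_B)$, observe that the morphism-of-$\mathrm{Arr}(\mathcal{C})$ condition is exactly the monoid-morphism hypothesis on $f$, and reduce associativity, unitality, and commutativity to the corresponding componentwise axioms for $A$ and $B$. The paper merely spells out these reductions as explicit prism-shaped commutative diagrams in the appendix, which your componentwise argument compresses without any loss of rigour.
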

\begin{proof}
Let $\mathcal{C}$ be a monoidal category and ($A$, $\mu_{A}$, $\eta_{A}$) and ($B$, $\mu_{B}$, $\eta_{B})$ be monoids in $\mathcal{C}$. We can then define a monoid object $f: A \longrightarrow B $ in Arr($\mathcal{C}$), where $f$ is a morphism of monoids, via the following construction:
\begin{itemize}
    \item the multiplication is given by a morphism $\Tilde{\mu}=(\mu_{A}, \mu_{B})$ such that
    \[\begin{tikzcd}
 A \otimes_{\mathcal{C}} A  \arrow{r}{\mu_{A}} \arrow[swap]{d}{f \otimes_{\mathcal{C}}f} & \arrow{d}{f} A  \\
   B \otimes_{\mathcal{C}}B  \arrow{r}{\mu_{B}} & B
   \end{tikzcd}\]
   commutes.
    \item the unitor is a morphism $\Tilde{\eta} =(\eta_{A}, \eta_{B})$ such that
    \[\begin{tikzcd}
  \mathbb{I}_{\mathcal{C}} \arrow{r}{\eta_{A}} \arrow[swap]{d}{\mathrm{id}_{\mathbb{I}_{\mathcal{C}}}} & \arrow{d}{f} A  \\
   \mathbb{I}_{\mathcal{C}} \arrow{r}{\eta_{B}} & B
   \end{tikzcd}\]
   commutes.
\end{itemize}
We still need to show that the pentagon axiom and the unitor diagrams commute. This will be done in Appendix~\ref{app. proofs}.

\end{proof}
 Analogously, one can show that the following statement is true:
\begin{theorem}\label{rem.comonoids in arrow cats}
 Given a category $\C$, then a comonoid morphism in $\mathcal{C}$ is a comonoid in Arr($\mathcal{C}$). If the comonoid in $\mathcal{C}$ is cocommutative, then this also holds for the comonoid in Arr($\mathcal{C}$).     
\end{theorem}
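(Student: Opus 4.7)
The plan is to dualize the argument used for Theorem~\ref{thrm. monoids in arrow cats}: every structural map of a monoid gets reversed, and the condition that $f$ is a \emph{morphism of monoids} dualizes to $f$ being a \emph{morphism of comonoids}, which is exactly the commutativity condition needed for the structure maps to be arrows in $\mathrm{Arr}(\C)$.

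Concretely, let $(A,\Delta_A,\varepsilon_A)$ and $(B,\Delta_B,\varepsilon_B)$ be comonoids in $\C$ and let $f\colon A\longrightarrow B$ be a morphism of comonoids, i.e.\ $(f\otimes_{\C} f)\circ\Delta_A=\Delta_B\circ f$ and $\varepsilon_B\circ f=\varepsilon_A$. First I would define a candidate comultiplication and counit on the object $f$ of $\mathrm{Arr}(\C)$ by
\[
\widetilde\Delta_f := (\Delta_A,\Delta_B)\colon f\longrightarrow f\otimes_{\mathrm{Arr}(\C)} f,
\qquad
\widetilde\varepsilon_f := (\varepsilon_A,\varepsilon_B)\colon f\longrightarrow \mathrm{id}_{\mathbb{I}_\C}.
\]
The first step is to check that these pairs are in fact morphisms of $\mathrm{Arr}(\C)$, i.e.\ that the two squares
\[\begin{tikzcd}
A\arrow{r}{\Delta_A}\arrow[swap]{d}{f} & A\otimes_{\C} A\arrow{d}{f\otimes_{\C} f}\\
B\arrow{r}{\Delta_B} & B\otimes_{\C} B
\end{tikzcd}
\qquad
\begin{tikzcd}
A\arrow{r}{\varepsilon_A}\arrow[swap]{d}{f} & \mathbb{I}_\C\arrow{d}{\mathrm{id}_{\mathbb{I}_\C}}\\
B\arrow{r}{\varepsilon_B} & \mathbb{I}_\C
\end{tikzcd}\]
commute; but these are precisely the two equations asserting that $f$ is a comonoid morphism. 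This is the conceptual heart of the proof and the step I would want to state most cleanly.

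Next I would verify coassociativity and the counit laws for $(f,\widetilde\Delta_f,\widetilde\varepsilon_f)$. Exactly as in the monoid case, both axioms are expressed by 3-dimensional diagrams in $\mathrm{Arr}(\C)$ whose top and bottom faces are the coassociativity (resp.\ counit) diagrams of $(A,\Delta_A,\varepsilon_A)$ and $(B,\Delta_B,\varepsilon_B)$ in $\C$, whose side faces commute by the comonoid-morphism property of $f$ (together with the componentwise definition of $\otimes_{\mathrm{Arr}(\C)}$ from Proposition~\ref{prop. monoidal product in arrow cats}), and whose remaining faces commute by naturality of the associator and unitors already established when constructing $\mathrm{Arr}(\C)$ as a monoidal category. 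Since every face commutes, the full diagrams commute and the axioms hold. The argument is entirely parallel to the appendix verification referenced in Theorem~\ref{thrm. monoids in arrow cats}, so I would simply invoke duality.

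For the cocommutativity addendum, I would assume $\C$ is braided (so that $\mathrm{Arr}(\C)$ carries the braiding of Proposition~\ref{prop. braiding in arrow categories}) and that $\sigma_{A,A}\circ\Delta_A=\Delta_A$ and $\sigma_{B,B}\circ\Delta_B=\Delta_B$. Then
\[
\sigma_{f,f}\circ\widetilde\Delta_f=(\sigma_{A,A}\circ\Delta_A,\ \sigma_{B,B}\circ\Delta_B)=(\Delta_A,\Delta_B)=\widetilde\Delta_f,
\]
so $f$ is a cocommutative comonoid in $\mathrm{Arr}(\C)$. I do not anticipate any real obstacle: the only subtle point is recognizing that the comonoid-morphism hypothesis on $f$ is exactly what promotes the componentwise structure maps to morphisms in the arrow category, after which every axiom reduces to its counterpart in $\C$ applied twice.
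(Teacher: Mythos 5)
Your proposal is correct and follows exactly the route the paper intends: the paper gives no separate proof for this statement, merely asserting it follows ``analogously'' to Theorem~\ref{thrm. monoids in arrow cats}, and your dualization --- componentwise structure maps, the comonoid-morphism condition on $f$ supplying exactly the commuting squares needed for $\widetilde\Delta_f$ and $\widetilde\varepsilon_f$ to be morphisms of $\mathrm{Arr}(\C)$, and the axioms reducing to prism diagrams whose faces commute for the same reasons as in the monoid case --- is precisely that dual argument. Your explicit observation that the cocommutativity clause requires $\C$ to be braided is a point the paper leaves implicit, so if anything you are slightly more careful than the source.
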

Having defined both monoid and comonoid objects in a Arr($\mathcal{C}$), we can now define bialgebra objects in Arr($\mathcal{C}$). In the following, $\Delta$ denotes the comultiplication and $\epsilon$ is the counit 
of a comonoid in $\C$.
\begin{theorem}\label{thrm. bialgebras in arrow cats}
Let $(\mathcal{C}, \otimes_{\mathcal{C}}, \mathbb{I}_{\mathcal{C}})$ be a monoidal category and ($A$, $\mu_{A}$, $\eta_{A}$, $\Delta_{A}$, $\epsilon_{A}$) and ($B$, $\mu_{B}$, $\eta_{B}$, $\Delta_{B}$, $\epsilon_{B}$) be both a monoid and a comonoid in $\mathcal{C}$ such that the bialgebra axiom is satisfied. Then a morphim $f:A \longrightarrow B$ in $\mathcal{C}$, which is both a monoid and comonoid morphism, is a bialgebra object in Arr($\mathcal{C}$).
\end{theorem}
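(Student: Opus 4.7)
The plan is to assemble the bialgebra structure on $f$ in $\mathrm{Arr}(\mathcal{C})$ directly from the monoid and comonoid structures already supplied by Theorems~\ref{thrm. monoids in arrow cats} and~\ref{rem.comonoids in arrow cats}, and then to verify that the bialgebra compatibility reduces to its componentwise analogue in $\mathcal{C}$. Since $f$ is a monoid morphism, Theorem~\ref{thrm. monoids in arrow cats} endows $f$ with the monoid structure $\Tilde{\mu} = (\mu_A, \mu_B)$ and $\Tilde{\eta} = (\eta_A, \eta_B)$ in $\mathrm{Arr}(\mathcal{C})$; dually, because $f$ is a comonoid morphism, Theorem~\ref{rem.comonoids in arrow cats} yields the comonoid structure $\Tilde{\Delta} = (\Delta_A, \Delta_B)$ and $\Tilde{\epsilon} = (\epsilon_A, \epsilon_B)$. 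Both are well-defined morphisms in $\mathrm{Arr}(\mathcal{C})$ precisely because $f$ intertwines the respective structures on $A$ and $B$.

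Next I would verify the four bialgebra axioms. Composition of morphisms in $\mathrm{Arr}(\mathcal{C})$ is componentwise, the monoidal product is componentwise by Proposition~\ref{prop. monoidal product in arrow cats}, and the braiding needed to state the principal bialgebra axiom is componentwise by Proposition~\ref{prop. braiding in arrow categories}. Hence every equation in $\mathrm{Arr}(\mathcal{C})$ between composite morphisms built from $\Tilde{\mu},\Tilde{\eta},\Tilde{\Delta},\Tilde{\epsilon}$ is a pair of equations in $\mathcal{C}$, one on the source $A$ and one on the target $B$, each of which holds by the hypothesis that $A$ and $B$ are bialgebras. Concretely, I would render the main axiom $\Tilde{\Delta} \circ \Tilde{\mu} = (\Tilde{\mu} \otimes \Tilde{\mu}) \circ (\mathrm{id} \otimes \sigma \otimes \mathrm{id}) \circ (\Tilde{\Delta} \otimes \Tilde{\Delta})$ as a cube whose top and bottom faces are the bialgebra axioms for $A$ and $B$, and whose vertical faces commute via the monoid/comonoid morphism property of $f$ together with naturality of $\sigma$; the remaining three axioms (compatibility of $\Tilde{\epsilon}$ with $\Tilde{\mu}$, of $\Tilde{\Delta}$ with $\Tilde{\eta}$, and $\Tilde{\epsilon}\circ \Tilde{\eta} = \mathrm{id}$) unfold analogously but more simply.

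The main obstacle is not conceptual but notational: even the single cube for the principal bialgebra axiom doubles the already sizeable diagram from $\mathcal{C}$. I expect to defer the explicit diagram to the appendix, mirroring the treatment used for monoids in Theorem~\ref{thrm. monoids in arrow cats}, and to indicate in the main text only that all six faces commute for the reasons listed above. A minor caveat worth flagging is that the principal bialgebra axiom implicitly requires a braiding on $\mathcal{C}$; once this is assumed, the braiding inherited by $\mathrm{Arr}(\mathcal{C})$ makes the componentwise argument go through without modification.
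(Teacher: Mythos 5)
Your proposal is correct and takes essentially the same route as the paper: the paper also assembles $\Tilde{\mu},\Tilde{\eta},\Tilde{\Delta},\Tilde{\epsilon}$ componentwise and verifies the bialgebra compatibility via a cube whose top and bottom faces are the bialgebra axioms for $A$ and $B$ and whose remaining faces commute because $f$ is a monoid and comonoid morphism (the paper spells out the $\Delta$--$\mu$ and $\epsilon$--$\mu$ axioms explicitly). Your caveat about needing a braiding on $\mathcal{C}$ for the principal axiom is well observed — the theorem as stated assumes only a monoidal category yet the diagram uses $\sigma_{A,A}$ — but it does not affect the substance of the argument.
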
 
 \begin{proof}
Let ($A$, $\mu_{A}$, $\eta_{A}$, $\Delta_{A}$, $\epsilon_{A}$) and ($B$, $\mu_{B}$, $\eta_{B}$, $\Delta_{B}$, $\epsilon_{B}$) be two objects which are both a monoid and a comonoid and both satisfy the bialgebra axiom.
We then have that each $f: A\longrightarrow B$ which is both a monoid and a comonoid morphism is a monoid and a comonoid in Arr($\mathcal{C}$). The morphism $f$ is a bialgebra object in Arr($\mathcal{C}$), if it satisfies the two bialgebra axioms in Arr($\mathcal{C}$). The first axiom requires that the following diagram commutes:
 \[\begin{tikzcd}[row sep=1cm, column sep=1cm, inner sep=2ex]
& A \otimes A
\arrow[swap]{dl}{\Delta_{A} \otimes \Delta_{A}}\arrow{rrrr}{\mu_{A}} \arrow[dotted]{dd}[yshift=-0.5cm]{f \otimes f} & & & & A  \arrow[swap]{dl}{\Delta_{A}} \arrow{dd}{f} \\
(A \otimes A) \otimes (A \otimes A) \arrow [crossing over]{rr}[xshift= 1cm]{\mathrm{id}_{A} \otimes \sigma_{A,A} \otimes \mathrm{id}_{A}} \arrow{dd} {(f \otimes f) \otimes (f \otimes f)} & & (A \otimes A) \otimes (A \otimes A) \arrow [yshift=0.5cm]{dd} {(f \otimes f) \otimes (f \otimes f)} \arrow{rr}{\mu_{A} \otimes \mu_{A}} & &  A \otimes A \arrow [yshift= 0.5 cm]{dd}{f \otimes f} \\
& B \otimes B \arrow[dotted]{dl}{\Delta_{B} \otimes \Delta_{B}} \arrow[dotted] {rrrr}[xshift= -0.5cm]{\mu_{B}} & & & & B \arrow{dl}{\Delta_{B}} \\
(B \otimes B) \otimes (B \otimes B) \arrow[swap]{rr}{\mathrm{id}_{B} \otimes \sigma_{B,B} \otimes \mathrm{id}_{B}} & & (B \otimes B) \otimes (B \otimes B) \arrow{rr}{\mu_{B} \otimes \mu_{B}} & & B\otimes B \\
\end{tikzcd}
\]
The left face commutes because $f$ is a morphism of comonoids and because of the definition of the monoidal product in Arr($\mathcal{C}$). Analogously, the right face commutes since $f$ is a morphism of comonoids. Because $A$ and $B$ are bialgebra objects in $\mathcal{C}$, the top and the bottom face commute. The back face and the right square in the front commute because $f$ is a monoid morphism and because of the definition of the monoidal product in Arr($\mathcal{C}$). Finally, the left square in the front also commutes because of the definition of the braiding in $\mathcal{C}$ and the monoidal product in Arr($\mathcal{C}$) and thus the whole diagram commutes.\\\\
The second bialgebra axiom requires that the following diagram commutes:
\[\begin{tikzcd}[row sep=1cm, column sep=2cm, inner sep=2ex]
& A \otimes A
\arrow[swap]{dl}{\mu_{A} }\arrow{rr}{\epsilon_{A}\otimes \epsilon_{A}} \arrow[dotted]{dd}[yshift=-0.5cm]{f \otimes f} & & \mathbb{I} \otimes \mathbb{I} \arrow[swap]{dl}{\Tilde{=}} \arrow{dd}{\mathrm{id}_{\mathbb{I}}\otimes \mathrm{id}_{\mathbb{I}}} \\
A \arrow [crossing over]{rr}[xshift= 0.5cm]{\epsilon_{A}} \arrow{dd} {f} & & \mathbb{I} \arrow [crossing over]{dd} [yshift=0.5cm]{\mathrm{id}_{\mathbb{I}}}\\
& B \otimes B \arrow[dotted]{dl}{\mu_{B}} \arrow[dotted] {rr}[xshift= -0.5cm]{\epsilon_{B}\otimes \epsilon_{B}} & & \mathbb{I} \otimes \mathbb{I} \arrow{dl}{\Tilde{=}} \\
B \arrow[swap]{rr}{\epsilon_{B}} & & \mathbb{I} \\
\end{tikzcd}
\]
Here the back face commutes because of the definition of the monoidal product in Arr($\mathcal{C}$) and because $f$ is a morphism of comonoids. The left and the front face also commute, because $f$ is a morphism of monoids. Finally, the top and bottom face commute as $A$ and $B$ are bialgebra objects in $\mathcal{C}$ and thus the whole diagram commutes.
\end{proof}

\begin{satz}\label{prop. FA in arrow cat}
Let $(\mathcal{C}, \otimes_{\mathcal{C}}, \mathbb{I}_{\mathcal{C}})$ be a monoidal category and $A$, $B$ be two Frobenius structures in  $\mathcal{C}$ (in particular, $A$ and $B$ are both monoids and comonoids). Then an object $f: A\longrightarrow B$ in Arr($\mathcal{C}$), which is both a monoid and a comonoid morphism is a Frobenius structure in Arr($\mathcal{C}$). If $A$ and $B$ are special Frobenius algebras in $\mathcal{C}$, then $f: A\longrightarrow B$ is a special Frobenius structure in Arr($\mathcal{C}$).
\end{satz}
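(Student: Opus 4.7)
The plan is to reduce the Frobenius identities for $f$ in Arr($\mathcal{C}$) to those for $A$ and $B$ in $\mathcal{C}$, exploiting the fact that every piece of structure involved is assembled componentwise, exactly as in Theorems~\ref{thrm. monoids in arrow cats},~\ref{rem.comonoids in arrow cats} and~\ref{thrm. bialgebras in arrow cats}. Concretely, Theorems~\ref{thrm. monoids in arrow cats} and~\ref{rem.comonoids in arrow cats} already equip $f$ with monoid data $\Tilde{\mu}=(\mu_A,\mu_B)$, $\Tilde{\eta}=(\eta_A,\eta_B)$ and comonoid data $\Tilde{\Delta}=(\Delta_A,\Delta_B)$, $\Tilde{\epsilon}=(\epsilon_A,\epsilon_B)$ in Arr($\mathcal{C}$), so what remains is to verify the Frobenius identity $(\Tilde{\mu}\otimes\mathrm{id}_f)\circ(\mathrm{id}_f\otimes\Tilde{\Delta}) = \Tilde{\Delta}\circ\Tilde{\mu} = (\mathrm{id}_f\otimes\Tilde{\mu})\circ(\Tilde{\Delta}\otimes\mathrm{id}_f)$ in Arr($\mathcal{C}$).

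I would then exploit that both composition and the tensor product in Arr($\mathcal{C}$) act componentwise (Proposition~\ref{prop. monoidal product in arrow cats}). Each of the three composites above is then a pair of $\mathcal{C}$-morphisms whose first coordinate involves only $\mu_A,\Delta_A,\mathrm{id}_A$ and whose second involves only $\mu_B,\Delta_B,\mathrm{id}_B$. Hence the equation in Arr($\mathcal{C}$) is equivalent to the conjunction of the Frobenius identities for $A$ and for $B$ in $\mathcal{C}$, both of which hold by hypothesis. Pictorially this amounts to a cube whose top and bottom faces are the Frobenius identities for $A$ and $B$, whose two vertical side faces commute because $f$ is a monoid (resp.\ comonoid) morphism, and whose remaining faces commute by the componentwise definition of $\otimes_{\mathrm{Arr}(\mathcal{C})}$.

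For the speciality claim, $\Tilde{\mu}\circ\Tilde{\Delta} = (\mu_A\circ\Delta_A,\ \mu_B\circ\Delta_B)$, which equals $(\mathrm{id}_A,\mathrm{id}_B) = \mathrm{id}_f$ as soon as $A$ and $B$ are special Frobenius in $\mathcal{C}$. The main obstacle is not conceptual but organisational: the only thing needing real care is drawing the vertical faces of the cube explicitly and noting that each commutes by the morphism-of-monoids and morphism-of-comonoids squares for $f$, combined with Proposition~\ref{prop. monoidal product in arrow cats}. No new input beyond what is already used in Theorem~\ref{thrm. bialgebras in arrow cats} is required.
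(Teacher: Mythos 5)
Your proposal is correct and follows essentially the same route as the paper: the Frobenius law (and speciality) in Arr($\mathcal{C}$) is verified componentwise, with the top and bottom faces of the cube given by the Frobenius identities for $A$ and $B$ and the remaining faces commuting because $f$ is a morphism of monoids and comonoids together with the componentwise definition of $\otimes_{\mathrm{Arr}(\mathcal{C})}$. The paper simply draws these cubes explicitly; no substantive difference.
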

\begin{proof}
The Frobenius law in Arr($\mathcal{C}$) is given by the following diagram:
\[\begin{tikzcd}[row sep=1cm, column sep=2cm, inner sep=2ex]
& A \otimes A
\arrow[swap]{dl}{\mathrm{id}_{A} \otimes \Delta_{A}}\arrow{rr}{\Delta_{A} \otimes \mathrm{id}_{A}} \arrow[dotted]{dd}[yshift=-0.5cm]{f \otimes f} & & A \otimes A \otimes A \arrow[swap]{dl}{\mathrm{id}_{A} \otimes \mu_{A}} \arrow{dd}{f\otimes f \otimes f} \\
A \otimes A \otimes A  \arrow [crossing over]{rr}[xshift= 1cm]{\mu_{A} \otimes \mathrm{id}_{A} } \arrow{dd} {f \otimes f \otimes f} & & A \otimes A \arrow [crossing over]{dd} [yshift=0.5cm]{f \otimes f}\\
& B \otimes B \arrow[dotted]{dl}{\mathrm{id}_{B} \otimes \Delta_{B}} \arrow[dotted] {rr}[xshift= -1cm]{\Delta_{B} \otimes \mathrm{id}_{B}} & & B \otimes B \otimes B \arrow{dl}{\mathrm{id}_{B} \otimes \mu_{B}} \\
B \otimes B \otimes B  \arrow[swap]{rr}{\mu_{B} \otimes \mathrm{id}_{B}} & & B \otimes B \\
\end{tikzcd}
\]
The top and the bottom face of the diagram commutes because $A$ and $B$ are Frobenius algebras in $\mathcal{C}$. The left face commutes because of the definition of the monoidal product in Arr($\mathcal{C}$) and because $f$ is a morphism of comonoids. Analogously, the back face commutes. Finally, the front and the right face commute because of the definition of the monoidal product in $\mathcal{C}$ and because $f$ is a morphism of monoids and thus the whole diagram commutes.\\\\
If $A$ and $B$ are both special, then the following diagram also commutes as $f$ is both a morphism of monoids and comonoids:
\[\begin{tikzcd}[row sep=1cm, column sep=2cm, inner sep=2ex]
 A \arrow[swap]{dr}{ \Delta_{A}}\arrow{rr}{\mathrm{id}_{A}} \arrow[swap]{dd}[yshift=-0.5cm]{f} & &  A  \arrow{dd}{f} \\
& A \otimes A \arrow[swap]{ur}{\mu_{A}} \arrow{dd}[yshift= 0.5cm] {f \otimes f } & \\
  B \arrow[swap]{dr}{ \Delta_{B}}\arrow[dotted]{rr} [xshift= -0.5cm] {\mathrm{id}_{B}}  & &  B \\
& B \otimes B \arrow[swap]{ur}{\mu_{B}} & \\
\end{tikzcd}
\]
and hence $f$ is also special.
\end{proof}
\begin{satz}\label{prop. dagger Frobenius Algebras in aaro cat}
Let $(\mathcal{C}, \otimes_{\mathcal{C}}, \mathbb{I}_{\mathcal{C}})$ be a monoidal dagger category and $A$, $B$ be two objects in  $\mathcal{C}$ which are dagger Frobenius algebras in $\mathcal{C}$. Then a unitary morphism of both monoids and comonoids $f:A \longrightarrow B$ is a dagger Frobenius structure in Arr($\mathcal{C}$).
\begin{proof}
Let $f:A \longrightarrow B$ be a morphism of bialgebras, i. e. a moprhism of monoids and comonoids that is also unitary, i. e.: $f^{\dagger} = f^{-1}$. We can define a multiplication and a comultiplication in Arr($\mathcal{C}$) via 
\begin{align*}
    \mu_{f} &= (\mu_{A}, \mu_{B}): f\otimes_{\mathcal{C}} f \longrightarrow f \ \ \ \ \text{and} \\
    \Delta_{f} &= (\Delta_{A}, \Delta_{B}): f \longrightarrow f\otimes_{\mathcal{C}} f 
\end{align*}
respectively. Applying the dagger functor from Prop.~\ref{prop. dagger functor} on the comultiplication then gives
\begin{align*}
    \Delta_{f}^{\dagger}= (\Delta_{A}, \Delta_{B})^{\dagger}= (\Delta_{A}^{\dagger}, \Delta_{B}^{\dagger})= (\mu_{A}, \mu_{B}) = \mu_{f}
\end{align*}
where we used the fact that $A$ and $B$ are dagger Frobenius algebras. Similarly, we get that
\begin{align*}
    \epsilon_{f}^{\dagger}= (\epsilon_{A}^{\dagger}, \epsilon_{B}^{\dagger}) = (\eta_{A}, \eta_{B})= \eta_{f}
\end{align*}
and hence $f:A \longrightarrow B$ is a dagger Frobenius structure in Arr($\mathcal{C}$).
\end{proof}
\end{satz}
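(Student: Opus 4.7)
The plan is to reduce this claim to structure we have already established. First, I would note that Proposition~\ref{prop. FA in arrow cat} already exhibits $f:A\longrightarrow B$ as a Frobenius structure in $\mathrm{Arr}(\mathcal{C})$, with multiplication $\mu_{f}=(\mu_{A},\mu_{B})$, unit $\eta_{f}=(\eta_{A},\eta_{B})$, comultiplication $\Delta_{f}=(\Delta_{A},\Delta_{B})$, and counit $\epsilon_{f}=(\epsilon_{A},\epsilon_{B})$---all these component pairs give valid morphisms in $\mathrm{Arr}(\mathcal{C})$ precisely because $f$ was assumed to be a morphism of both monoids and comonoids. So nothing remains of the Frobenius part, and the task reduces to showing that the dagger in $\mathcal{C}$ lifts to an involution on (a suitable subcategory of) $\mathrm{Arr}(\mathcal{C})$ satisfying $\mu_{f}=\Delta_{f}^{\dagger}$ and $\eta_{f}=\epsilon_{f}^{\dagger}$.

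Second, I would invoke Proposition~\ref{prop. dagger functor} to equip $\mathrm{Arr}(\mathcal{C}_{uni})$ with a dagger acting componentwise by $(\phi,\psi)^{\dagger}=(\phi^{\dagger},\psi^{\dagger})$. The unitarity hypothesis on $f:A\longrightarrow B$ is exactly what places $f$ among the objects to which this dagger functor applies, and it is also what ensures that the dagger of a commutative square is again a commutative square: given $\psi\circ f=g\circ\phi$ with $f,g$ unitary, one obtains $\psi^{\dagger}\circ g=f\circ\phi^{\dagger}$ by composing with $f^{\dagger}=f^{-1}$ and $g^{\dagger}=g^{-1}$.

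Third, the two dagger Frobenius identities reduce to a line of componentwise bookkeeping using that $A$ and $B$ are themselves dagger Frobenius algebras in $\mathcal{C}$:
\begin{align*}
\Delta_{f}^{\dagger} &= (\Delta_{A},\Delta_{B})^{\dagger} = (\Delta_{A}^{\dagger},\Delta_{B}^{\dagger}) = (\mu_{A},\mu_{B}) = \mu_{f},\\
\epsilon_{f}^{\dagger} &= (\epsilon_{A},\epsilon_{B})^{\dagger} = (\epsilon_{A}^{\dagger},\epsilon_{B}^{\dagger}) = (\eta_{A},\eta_{B}) = \eta_{f}.
\end{align*}

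The main obstacle I anticipate is not conceptual but formal: one must check that the dagger of Proposition~\ref{prop. dagger functor} can be applied to the structure morphisms $\mu_{f},\Delta_{f},\eta_{f},\epsilon_{f}$, whose individual components are not unitary. The resolution is to observe that the componentwise dagger is well-defined on \emph{all} morphisms between unitary objects of $\mathrm{Arr}(\mathcal{C})$, not just on the subcategory where all morphisms are unitary---the calculation at the end of the previous paragraph uses only that the \emph{objects} $f$ and $g$ are unitary. Once this slightly enlarged dagger subcategory is identified as the natural home of the argument, the three-line verification above completes the proof.
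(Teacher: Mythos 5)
Your proof is correct and follows essentially the same route as the paper's: define $\mu_{f},\Delta_{f},\eta_{f},\epsilon_{f}$ componentwise, apply the componentwise dagger, and use that $A$ and $B$ are dagger Frobenius algebras in $\mathcal{C}$. Your additional observation is a genuine improvement in rigor: Proposition~\ref{prop. dagger functor} only defines the dagger on $\mathrm{Arr}(\mathcal{C}_{uni})$, whose morphisms have unitary components, whereas $\mu_{A},\Delta_{A},\eta_{A},\epsilon_{A}$ are generally not unitary, so the paper's invocation of that proposition is strictly speaking not licensed; your check that $(\phi,\psi)\mapsto(\phi^{\dagger},\psi^{\dagger})$ sends commuting squares to commuting squares whenever only the \emph{objects} $f,g$ are unitary (via $\psi^{\dagger}\circ g=f\circ\phi^{\dagger}$, obtained by daggering $\psi\circ f=g\circ\phi$ and composing with $f=(f^{\dagger})^{-1}$ and $g^{\dagger}=g^{-1}$) is exactly the missing justification, and your explicit appeal to Proposition~\ref{prop. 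FA in arrow cat} for the underlying Frobenius structure likewise makes precise a step the paper leaves implicit.
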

This leads us to the following theorem:
\begin{theorem}
Let $(\mathcal{C}, \otimes_{\mathcal{C}}, \mathbb{I}_{\mathcal{C}})$ be a braided monoidal dagger category and let $A$ and $B$ be two objects in  $\mathcal{C}$ which are commutative special dagger Frobenius algebras in $\mathcal{C}$. Then a unitary morphism of Frobenius algebras $f: A \longrightarrow B$ is a commutative special dagger Frobenius structure in Arr($\mathcal{C}$).
\end{theorem}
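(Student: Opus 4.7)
The plan is to assemble the result directly from Proposition~\ref{prop. FA in arrow cat} and Proposition~\ref{prop. dagger Frobenius Algebras in aaro cat}, and then verify commutativity as a single additional componentwise check using the braiding on $\mathrm{Arr}(\C)$ from Proposition~\ref{prop. braiding in arrow categories}.

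First, since $f : A \to B$ is a unitary morphism of Frobenius algebras, it is in particular a morphism of the underlying monoids and of the underlying comonoids. Proposition~\ref{prop. FA in arrow cat} therefore applies and shows that $f$ is a Frobenius structure in $\mathrm{Arr}(\C)$, and moreover that it is special whenever $A$ and $B$ are special, which is assumed here. Using that $f$ is additionally unitary and that $A$, $B$ are dagger Frobenius in $\C$, Proposition~\ref{prop. dagger Frobenius Algebras in aaro cat} then gives $\Delta_f^{\dagger} = (\Delta_A^{\dagger}, \Delta_B^{\dagger}) = (\mu_A, \mu_B) = \mu_f$ and $\epsilon_f^{\dagger} = \eta_f$, so $f$ is a dagger Frobenius structure in $\mathrm{Arr}(\C)$ as well.

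The only property still to verify is commutativity of $f$ viewed as a monoid in $\mathrm{Arr}(\C)$. By Proposition~\ref{prop. braiding in arrow categories} the braiding at $(f,f)$ is $\sigma_{f,f} = (\sigma_{A,A}, \sigma_{B,B})$, and commutativity asks that $\mu_f \circ \sigma_{f,f} = \mu_f$. Because composition and the monoidal product in $\mathrm{Arr}(\C)$ are defined componentwise, this reduces to the pair of equations $\mu_A \circ \sigma_{A,A} = \mu_A$ and $\mu_B \circ \sigma_{B,B} = \mu_B$, each of which holds by assumption that $A$ and $B$ are commutative in $\C$. There is no real obstacle in this argument: the entire statement is a bookkeeping corollary of the previous propositions together with the observation that commutativity, like the other structural axioms of Frobenius algebras in $\mathrm{Arr}(\C)$ considered so far, decomposes componentwise into the same axiom applied separately at the source and target of $f$.
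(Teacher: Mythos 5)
Your proposal is correct and follows essentially the same route as the paper, which simply assembles the result from Proposition~\ref{prop. monoidal product in arrow cats}, Proposition~\ref{prop. FA in arrow cat} and Proposition~\ref{prop. dagger Frobenius Algebras in aaro cat}. Your explicit componentwise verification of commutativity via $\sigma_{f,f}=(\sigma_{A,A},\sigma_{B,B})$ is a welcome elaboration of a step the paper leaves implicit (it is covered by the commutativity clause of Theorem~\ref{thrm. monoids in arrow cats}), but it is not a different argument.
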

\begin{proof}
We only need to apply Theorem~\ref{prop. monoidal product in arrow cats}, Proposition~\ref{prop. FA in arrow cat} and Proposition~\ref{prop. dagger Frobenius Algebras in aaro cat}.
\end{proof}
\begin{theorem}\label{thr. Hopf alegbras in arrow cats}
Let $\mathcal{C}$ be a monoidal category and ($A$, $\mu_{A}$, $\eta_{A}$, $\Delta_{A}$, $\epsilon_{A}$, $S_{A}$) and ($B$, $\mu_{B}$, $\eta_{B}$, $\Delta_{B}$, $\epsilon_{B}$, $S_{B}$) be Hopf algebra objects in $\mathcal{C}$ where $S_{A}: A \longrightarrow A$ and $S_{B}: B \longrightarrow B$ are antipodes in $\mathcal{C}$. Then morphisms of Hopf algebras are Hopf algebra objects in Arr($\mathcal{C}$).
\end{theorem}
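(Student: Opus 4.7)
The plan is to equip the bialgebra object $f\colon A \to B$ in $\mathrm{Arr}(\mathcal{C})$ with the pointwise antipode $\tilde{S}_{f} := (S_{A}, S_{B})\colon f \longrightarrow f$. By Theorem~\ref{thrm. bialgebras in arrow cats}, any morphism of bialgebras $f\colon A \to B$ is already a bialgebra object in $\mathrm{Arr}(\mathcal{C})$ with the componentwise multiplication, unit, comultiplication and counit, and a morphism of Hopf algebras is in particular a morphism of bialgebras. What remains is therefore only to verify that $(S_{A}, S_{B})$ is a well-defined morphism in $\mathrm{Arr}(\mathcal{C})$ and that it satisfies the antipode axioms there.

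First I would check that $(S_{A}, S_{B})$ really defines a morphism $f \longrightarrow f$ in $\mathrm{Arr}(\mathcal{C})$. This amounts to commutativity of the naturality square $f \circ S_{A} = S_{B} \circ f$, which is precisely the compatibility of a Hopf algebra morphism with the antipodes and is therefore part of the hypothesis.

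Next I would verify the two antipode identities
\[
\tilde{\mu} \circ (\tilde{S}_{f} \otimes_{\mathrm{Arr}(\mathcal{C})} \mathrm{id}_{f}) \circ \tilde{\Delta} \;=\; \tilde{\eta} \circ \tilde{\epsilon} \;=\; \tilde{\mu} \circ (\mathrm{id}_{f} \otimes_{\mathrm{Arr}(\mathcal{C})} \tilde{S}_{f}) \circ \tilde{\Delta}
\]
in $\mathrm{Arr}(\mathcal{C})$. Since all of $\tilde{\mu}$, $\tilde{\Delta}$, $\tilde{\eta}$, $\tilde{\epsilon}$, $\tilde{S}_{f}$ and $\otimes_{\mathrm{Arr}(\mathcal{C})}$ are defined componentwise in the $A$- and $B$-factors, each identity above unpacks into the pair of antipode identities in $\mathcal{C}$ for $A$ and for $B$ separately, both of which hold by hypothesis. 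Organised diagrammatically, these are cube diagrams whose top and bottom faces are the antipode axioms in $\mathcal{C}$, and whose four side faces commute by the definitions of the tensor product and of the structure morphisms in $\mathrm{Arr}(\mathcal{C})$ together with the fact that $f$ is a morphism of monoids and of comonoids.

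The main obstacle is not mathematical content but diagrammatic bookkeeping: the componentwise nature of every piece of structure in $\mathrm{Arr}(\mathcal{C})$ reduces the Hopf-algebra axioms there to the Hopf-algebra axioms in $\mathcal{C}$ coordinate by coordinate. The only genuinely non-formal observation is that the hypothesis must be a morphism of Hopf algebras rather than merely a morphism of bialgebras, because the commutativity of the square built from the antipodes is exactly what promotes $(S_{A}, S_{B})$ to an arrow of $\mathrm{Arr}(\mathcal{C})$ in the first place.
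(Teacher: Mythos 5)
Your proposal is correct and follows essentially the same route as the paper: define the antipode pointwise as $(S_{A},S_{B})$, use the antipode-compatibility square $f\circ S_{A}=S_{B}\circ f$ from the Hopf-morphism hypothesis to see this is a morphism $f\to f$ in $\mathrm{Arr}(\mathcal{C})$, and reduce the antipode axioms to cube diagrams whose top and bottom faces are the Hopf axioms for $A$ and $B$ and whose side faces commute because $f$ is a morphism of (co)monoids and the structure is componentwise. The paper writes out only one of the two antipode identities explicitly, but the argument is the same.
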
 
\begin{proof}
Let $\mathcal{C}$ be a monoidal category and ($A$, $\mu_{A}$, $\eta_{A}$, $\Delta_{A}$, $\epsilon_{A}$, $S_{A}$) and ($B$, $\mu_{B}$, $\eta_{B}$, $\Delta_{B}$, $\epsilon_{B}$, $S_{B}$) be Hopf algebra objects in $\mathcal{C}$, where $S_{A}: A \longrightarrow A$ and $S_{B}: B \longrightarrow B$ are antipodes in $\mathcal{C}$. We can then define an antipode  $S=(S_{A}, S_{B})$
\[\begin{tikzcd}
  A \arrow{r}{S_{A}} \arrow[swap]{d}{f} & \arrow{d}{f} A  \\
   B \arrow{r}{S_{B}} & B
   \end{tikzcd}\]
where $f$ is a morphism of antipodes and thus a Hopf algebra object in Arr($\mathcal{C}$).

We now have to prove that the Hopf algebra axiom is satisfied if $f$ is a morphism of Hopf algebras. In Arr($\mathcal{C}$) this means that the following diagram has to commute:
\[\begin{tikzcd}[row sep=1cm, column sep=2cm, inner sep=2ex]
& A \arrow[swap]{dl}{\Delta_{A}}\arrow{rrrr}{\epsilon_{A}} \arrow[dotted]{dd}[yshift=-0.5cm]{f} & & & & \mathbb{I}  \arrow[swap]{dl}{\eta_{A}} \arrow{dd}{\mathrm{id}_{\mathbb{I}}} \\
A \otimes A \arrow [crossing over]{rr}[xshift= 1cm]{\mathrm{id}_{A} \otimes S_{A}} \arrow{dd} [yshift=0.5cm] {f \otimes f} & & A \otimes A \arrow[crossing over]{dd} [yshift= 0.5cm]{f \otimes f}\arrow{rr}{\mu_{A}} & & \arrow{dd}[yshift=0.5cm]{f} A \\
& B  \arrow[dotted]{dl}{\Delta_{B}} \arrow[dotted] {rrrr}[xshift= -0.5cm]{\epsilon_{B}} & & & &  \mathbb{I} \arrow{dl}{\eta_{B}} \\
B \otimes B \arrow[swap]{rr}{\mathrm{id}_{B} \otimes S_{B}} & &  B \otimes B  \arrow{rr}{\mu_{B}} & & B\\
\end{tikzcd}
\]

Because $f$ is a morphism of comonoids, the left and the right side face commute. Analogously, the second square of the front face and the back face commute, since $f$ is a morphism of monoids. The first square of the front face commutes because of the definition of the monoidal product in Arr($\mathcal{C}$) and the definition of the antipode in Arr($\mathcal{C}$). Finally, the top and the bottom face commute as $A$ and $B$ are Hopf algebras in $\mathcal{C}$ and hence the whole diagram commutes. 
\end{proof}
\appendix \label{app. proofs}
\section{Proofs}

\paragraph{Proof of \ref{prop. monoidal product in arrow cats} continued:}
\begin{proof}
We want to show that the pentagon and the triangle axiom are satisfied for the monoidal product defined in the beginning of this proof. The pentagon axiom in Arr($\mathcal{C}$) is given by the diagram on the next page where the front and the back face commute because $\alpha$ satisfies the pentagon axiom ($\alpha$ is the associator in $\C$). The two side faces commute due to the definition of the monoidal product and naturality of the associator in $\C$. The two top faces and the bottom face also commute due to naturality of the associator in $\C$ and hence the whole diagram commutes.\footnote{We have abbreviated the labels of the monoidal products and the associators trying to make the diagram more readable.}
\[
\def\stimes{{\otimes}}
\hspace{-3cm}
\small
\begin{tikzcd}[ampersand replacement=\&]
\& A_{1}\otimes (A_{2} \otimes (A_{3} \otimes A_{4})) \arrow[swap]{dl}{f_{1} \otimes ( f_{2} \otimes (f_{3} \otimes f_{4}))}\arrow{r}{\alpha} \arrow[dotted]{dd}[yshift=0.75cm]{\mathrm{id}_{A_{1}} \otimes \alpha} \&  (A_{1}\otimes A_{2}) \otimes (A_{3} \otimes A_{4}) \arrow{dl}{(f_{1} \otimes f_{2}) \otimes (f_{3} \otimes f_{4})} \arrow{r}{\alpha}  \&  ((A_{1}\otimes A_{2}) \otimes A_{3}) \otimes A_{4} \arrow{dl}{((f_{1} \otimes f_{2}) \otimes f_{3} )\otimes f_{4}} \arrow{dd}{\alpha \otimes \mathrm{id}_{A_{4}}}
\\
B_{1}\otimes (B_{2} \otimes (B_{3} \otimes B_{4})) \arrow {r}{\alpha} \arrow{dd} {\mathrm{id}_{B_{1}} \otimes \alpha} \& (B_{1}\otimes B_{2}) \otimes (B_{3} \otimes B_{4})\arrow{r}{\alpha} \&   ((B_{1}\otimes B_{2}) \otimes B_{3}) \otimes B_{4} \arrow[crossing over]{dd}[yshift=1cm]{\alpha}\\
\& A_{1}\otimes ((A_{2} \otimes A_{3}) \otimes A_{4})\arrow[dotted]{dl}{f_{1} \otimes ((f_{2}\otimes f_{3} )\otimes f_{4})} \arrow[dotted]{rr}[xshift=1cm]{\alpha} \& \& (A_{1}\otimes (A_{2} \otimes A_{3})) \otimes A_{4} \arrow{dl}{(f_{1} \otimes (f_{2} \otimes f_{3} ))\otimes f_{4}} \\
B_{1}\otimes ((B_{2} \otimes B_{3}) \otimes B_{4}) \arrow[swap]{rr}{\alpha} \& \& (B_{1}\otimes (B_{2} \otimes B_{3})) \otimes B_{4}
\end{tikzcd}
\]
The triangle axiom is given by the diagram: 
\[\begin{tikzcd}[row sep=scriptsize, column sep=scriptsize]
(A\otimes \mathbb{I}) \otimes A' \arrow{dd}{(f\otimes \mathrm{id}_{\mathbb{I}}) \otimes f'} \arrow{dr}{\rho \otimes \mathrm{id}_{A'} } \arrow{rr}{\alpha} & &  A\otimes (\mathbb{I} \otimes A') \arrow{dd}{f\otimes (\mathrm{id}_{\mathbb{I}} \otimes f')} \arrow{dl}{\mathrm{id}_{A}\otimes \lambda } \\
&   A \otimes A' \arrow{dd}[yshift= 0.5 cm]{f\otimes f'} & \\
(B\otimes \mathbb{I}) \otimes B'\arrow{dr}{\rho \otimes \mathrm{id}_{B'} } \arrow[dotted]{rr}[xshift= 1cm]{\alpha} & & B\otimes (\mathbb{I} \otimes B') \arrow{dl}{\mathrm{id}_{B}\otimes \lambda} \\
 & B \otimes B'  & 
\end{tikzcd} 
\]
Here the top and the bottom face commute because $\rho$ and $\alpha$ satisfy the triangle identity in $\C$ and the two side faces commute due to naturality of the left and right unitors in $\mathcal{C}$. Finally, the back face commutes because of the naturality of the associator in $\C$.    
\end{proof}
\paragraph{Proof of Theorem~\ref{thrm. monoids in arrow cats} continued:}
\begin{proof}
We will first show that the multiplication in the above construction satisfies the pentagon axiom. For an arbitrary morphism $f: A \longrightarrow B$ between two monoid objects $A$, $B$ $\in$ obj($\mathcal{C}$) the pentagon axiom in $\C$ gives rise to the following diagram:
\[
\begin{tikzcd} [row sep=2 cm, column sep=2cm, inner sep=1ex]
& (A\otimes A) \otimes A \arrow[swap]{dl}{(f \otimes f) \otimes f}\arrow{r}{\alpha} \arrow[dotted]{dd}[yshift=1cm]{\mu_{A} \otimes \mathrm{id}_{A}} &  A\otimes (A \otimes A) \arrow{dl}{f \otimes (f \otimes f)} \arrow{r}{\mathrm{id}_{A} \otimes \mu_{A}}  &   A\otimes A \arrow{dl}{f\otimes f} \arrow{dd}{\mu_{A}}
\\
(B\otimes B) \otimes B \arrow {r}{\alpha} \arrow{dd} [yshift= 1cm] {\mu_{B} \otimes \mathrm{id}_{B}} & [crossing over]B\otimes (B \otimes B)\arrow{r}{\mathrm{id}_{B} \otimes \mu_{B}} &  B \otimes B \arrow[crossing over]{dd}[yshift=1cm]{\mu_{B}}\\
&  A\otimes A \arrow[dotted]{dl}{f \otimes f } \arrow[dotted]{rr}[xshift=1cm]{\mu_{A}} & & A \arrow{dl}{f} \\
B \otimes B \arrow[swap]{rr}{\mu_{B}} & & B\\
\end{tikzcd}
\]
The front and the back face commute because $A$ and $B$ are monoids in $\mathcal{C}$. Moreover, the bottom and the right face commute because $f$ is a morphism of monoids. The left face commutes because of the definition of the monoidal product in Arr($\mathcal{C}$) and because $f$ is a morphism of monoids. The left square in the top commutes because of naturality the associator in $\mathcal{C}$ and the right square in the top commutes because of the definition of the monoidal product in Arr($\mathcal{C}$) and because $f$ is a morphism of monoids. Hence the whole diagram commutes and the pentagon axiom is satisfied.\\\\
Moreover, we need to show that the unitor diagram commutes:
\[\begin{tikzcd}[row sep=2cm, column sep=2cm, inner sep=2ex]
\mathbb{I} \otimes_{\mathcal{C}} A \arrow{dd}{\mathrm{id}_{\mathbb{I}}\otimes_{\mathcal{C}} f} \arrow{dr}{\lambda_{A}} \arrow{rr}{\eta_{A}\otimes_{\mathcal{C}} \mathrm{id}_{A}} & &  A\otimes_{\mathcal{C}} A \arrow[dotted]{dd}{f \otimes_{\mathcal{C}} f} \arrow{dl}{\mu_{A}} & & \arrow{ll}{\mathrm{id}_{A} \otimes_{\mathcal{C}} \eta_{A}} \arrow{dd}{f \otimes_{\mathcal{C}} \mathrm{id}_{\mathbb{I}}} \arrow[crossing over]{dlll}{\rho_{A}} A \otimes_{\mathcal{C}} \mathbb{I} \\
&  A \arrow{dd}[yshift= 1cm]{f} & & &\\
\mathbb{I} \otimes_{\mathcal{C}} B  \arrow{dr}{\lambda_{B}} \arrow[dotted]{rr}[xshift=1cm]{\eta_{B}\otimes_{\mathcal{C}} \mathrm{id}_{B}} & &  B\otimes_{\mathcal{C}} B  \arrow[dotted]{dl}{\mu_{B}} & & \arrow[dotted]{ll}{\mathrm{id}_{B} \otimes_{\mathcal{C}} \eta_{B}}  \arrow{dlll}{\rho_{B}} B \otimes_{\mathcal{C}} \mathbb{I} \\
 &   B & & &
\end{tikzcd} 
\]
The top and the bottom face commute because $A$ and $B$ are monoids in $\mathcal{C}$. The left and the right face commute due to the definition of left and right unitors in $\mathcal{C}$ and the definition of the monoidal product in Arr($\mathcal{C}$). Because $f$ is a morphism of monoids, the inner face also commutes. The two squares in the back commute as well, because of the definition of the monoidal product in Arr($\mathcal{C}$) and because $f$ is a monoid morphism. Hence the whole diagram commutes.
\\\\
Finally, commutativity means that the following diagram has to commute
\[\begin{tikzcd}[row sep=2cm, column sep=2cm]
A \otimes_{\mathcal{C}} A \arrow{dd}{f \otimes_{\mathcal{C}} f} \arrow{dr}{\mu_{A}} \arrow{rr}{\sigma_{A,A}} & &  A\otimes_{\mathcal{C}} A \arrow{dd}{f \otimes_{\mathcal{C}} f} \arrow{dl}{\mu_{A} } \\
& A \arrow{dd}[yshift= 0.5 cm]{f} & \\
B \otimes_{\mathcal{C}} B  \arrow{dr}{\mu_{B}} \arrow[dotted]{rr}[xshift=-1cm]{\sigma_{B,B}} & &  B\otimes_{\mathcal{C}} B \arrow{dl}{\mu_{B} } \\
 & B  & 
\end{tikzcd} 
\]
which holds if both $A$ and $B$ are commutative monoids in $\mathcal{C}$.
\end{proof}

\end{document}